\documentclass{IEEEtran}

\usepackage{amsmath,amssymb,amsfonts,amsthm}
\usepackage{cite}
\usepackage{algorithmic}
\usepackage{mathrsfs}
\usepackage{graphicx}
\usepackage{algorithm,algorithmic}
\usepackage{textcomp}
\usepackage{epstopdf}

\usepackage[capitalise]{cleveref}

\newtheorem{definition}{Definition}
\newtheorem{assumption}{Assumption}
\newtheorem{condition}{Condition}
\newtheorem{lemma}{Lemma}
\newtheorem{corollary}{Corollary}

\newtheorem{theorem}{Theorem}
\newtheorem*{remark}{Remark}

\crefformat{equation}{\textup{#2(#1)#3}}

\newcommand{\bigroman}[1]{\uppercase\expandafter{\romannumeral#1}}

\def\BibTeX{{\rm B\kern-.05em{\sc i\kern-.025em b}\kern-.08em
    T\kern-.1667em\lower.7ex\hbox{E}\kern-.125emX}}
\markboth{\hskip25pc IEEE TRANSACTIONS AND JOURNALS TEMPLATE}
{Author \MakeLowercase{\textit{et al.}}: Title}

\begin{document}

\title{Constrained Optimization with Compressed Gradients: A Dynamical Systems Perspective}

\author{Zhaoyue Xia, \IEEEmembership{Student Member, IEEE},
        Jun Du, \IEEEmembership{Senior Member, IEEE}, Chunxiao Jiang, \IEEEmembership{Fellow, IEEE}, \\ H. Vincent Poor, \IEEEmembership{Life Fellow, IEEE}, 
        and Yong Ren, \IEEEmembership{Senior Member, IEEE}
\thanks{Z. Xia, J. Du, and Y. Ren are with the Department of Electronic Engineering, Tsinghua University, Beijing, 100084, China, (e-mail: xiazy19@mails.tsinghua.edu.cn; \{jundu,reny\}@tsinghua.edu.cn).} 
\thanks{C. Jiang is with Tsinghua Space Center, Tsinghua University, Beijing, 100084, China. (e-mail: jchx@tsinghua.edu.cn).}
\thanks{H. V. Poor is with the Department of Electrical and Computer Engineering, Princeton University, Princeton, NJ 08544 USA, (e-mail: poor@princeton.edu).}
}

\maketitle

\begin{abstract}
Gradient compression is of growing interests for solving constrained optimization problems including compressed sensing, noisy recovery and matrix completion under limited communication resources and storage costs. Convergence analysis of these methods from the dynamical systems viewpoint has attracted considerable attention because it provides a geometric demonstration towards the shadowing trajectory of a numerical scheme. In this work, we establish a tight connection between a continuous-time nonsmooth dynamical system called a perturbed sweeping process (PSP) and a projected scheme with compressed gradients. Theoretical results are obtained by analyzing the asymptotic pseudo trajectory of a PSP. We show that under mild assumptions a projected scheme converges to an internally chain transitive invariant set of the corresponding PSP. Furthermore, given the existence of a Lyapunov function $V$ with respect to a set $\Lambda$, convergence to $\Lambda$ can be established if $V(\Lambda)$ has an empty interior. Based on these theoretical results, we are able to provide a useful framework for convergence analysis of projected methods with compressed gradients. Moreover, we propose a provably convergent distributed compressed gradient descent algorithm for distributed nonconvex optimization. Finally, numerical simulations are conducted to confirm the validity of theoretical analysis and the effectiveness of the proposed algorithm. 
\end{abstract}

\begin{IEEEkeywords}
Constrained compressed optimization, dynamical system, convergence analysis, low-bit signal processing.
\end{IEEEkeywords}

\section{Introduction}
\label{sec:introduction}

Constrained optimization is a fundamental problem in mathematical programming \cite{2023LocalLinearConv,2024VariancedReduced,2023StochasticCompositionalGradient}, where the objective is to minimize a function subject to a set of constraints. These constraints are usually nonlinear and they often reflect real-world limitations such as resource availability, physical laws, or operational boundaries. The complexity of constrained optimization stems from the interplay between the objective function and these constraints, enforcing the trajectory of iterations produced by a optimization scheme to move along the boundaries of a constrained set or within the set. 

In popular machine learning applications (e.g., federated learning, neural network quantization, decentralized gradient tracking, etc.), compressed gradients instead of exact inputs are used in consideration of privacy concerns, transmission overheads and storage costs. In \cite{2024ProjGD}, the authors proposed projected gradient descent (GD) method for spectral compressed sensing. For transmission overheads in federated learning, a compressed stochastic GD (SGD) with adaptive step sizes was proposed \cite{2024Adaptive}. The authors \cite{2021CompressedGD} proposed a compressed GD algorithm with Hessian-aided error compensation.


As a classical and significant topic, convergence analysis of constrained optimization methods has been of interest due to its essential differences from that of unconstrained schemes. To be specific, a constrained method iteratively seeks a proximal point within a set, which yields a nonsmooth part in the iteration. To analyze the convergence properties of an optimization scheme, two principal methodologies have emerged: numerical analysis and dynamical systems theory. Numerical analysis \cite{2023QuantizationforDecentralized,2024PairwiseConstraints,2022SampleBased} offers a straightforward depiction of the concrete convergence rates, providing a clear understanding of the speed at which an algorithm approaches its optimal solution. However, this approach often lacks the deeper geometric insights that can be gleaned from a dynamical systems perspective. This latter approach, grounded in the study of continuous-time systems, enriches the analysis by revealing the underlying geometric structures and dynamics that influence the convergence behavior of optimization schemes. 

For constrained optmization, we take the standard projected SGD method for example:
\begin{equation}
  x_{k+1} = P_C[x_k - \alpha_k (\nabla f(x_k) + \xi_k)],
\end{equation}
where $C$ is a convex subset in $\mathbb{R}^m$ and $\xi_k$ is a random perturbation. Different from the unconstrained scheme, a projector is required to ensure that $x_k$ remains in $C$. Recall that an unconstrained scheme is linked with the continuous-time dynamical system $dx/dt = -\nabla f(x)$ \cite{1996DynSystApprochStocApprox}. Likewise, we are interested in the following differential inclusion:
\begin{equation}
  \frac{dx}{dt} \in -\nabla f(x(t)) - N_C(x(t)),
\end{equation}
where $N_C(x)$ is the normal cone of $C$ at $x$. The most significant advantage of a dynamical systems perspective lies in the simplicity of the treatment of a continuous-time system. Moreover, the convexity of $f$ is not required to establish the connection between a continuous dynamical system and the discrete iterative method. Therefore, we can focus on the limiting behavior of the continuous dynamical system. 

In a pratical system, the gradient measurements can be compressed for low storage costs and low hardware complexity especially in current machine learning applications. In this case, $\phi(\nabla f(x))$ is used instead of $\nabla f(x)$ for a compressor $\phi$. Correspondingly, the compression error $\phi(\nabla f(x)) - \nabla f(x)$ can be treated as a random perturbation. Therefore, it is important to investigate the effects of random perturbations and establish the connection between a perturbed iterative method and a continuous dynamical system.

In recent years, there has been a significant surge in research on constrained optimization from a dynamical systems perspective. We present a synthesis of some of the most recent and representative findings in this domain. In \cite{2023DSADMM}, the authors analyzed differential inclusions associated with accelerated variants of the alternating direction method of multipliers (ADMM) and illustrated a tradeoff between the convergence rate and the damping factor. A primal-dual dynamical system approach was proposed to track an inequality constrained time-varying convex optimization problem in \cite{2024DSTV}. For online time-varying optimization of linear time-invariant systems, a linear dynamical system was applied to develop a convergent projected primal-dual gradient flow method \cite{2022TVOLti}. Accelerated methods were developed under the framework of fixed-time stability of nonlinear dynamical systems for functions under Polyak-Ljasiewicz inequality conditions in \cite{2024FixedTime}. In \cite{2023DynamicOptimComplementarity}, dynamic optimization theory was established for nonlinear complementarity systems. The second-order dynamical system was extended to constrained distributed optimization in \cite{2024SecondOrderPrimalDual}. 

According to the classical result \cite{1996DynSystApprochStocApprox} in stochastic approximation, the continuous-time dynamics of an unconstrained iterative discrete method can be demonstrated by an ordinary differential equation (ODE). To be specific, a stochastic approximation scheme given by
\begin{equation}
  \label{eqn:general-stochastic-approximation}
  x_{n+1} = x_n + \alpha_{n+1} (\psi(x_n) + \xi_{n+1}),
\end{equation}
converges to an internally chain transitive set of the dynamical system expressed by the ODE $\dot{x} = \psi(x)$, where $\dot{x}$ means the derivative with respect to time, $\{\alpha_n\}$ are vanishing step sizes, $\psi$ is Lipschitz continuous, and $\{\xi_n\}$ is a sequence of martingale difference noise. 

Although it is straightforward to show that a GD method converges simply by replacing $\psi$ by $-\nabla f$ in \cref{eqn:general-stochastic-approximation}, the underlying relationship between the internally chain transitive set and the critical point set of $f$ is not immediately apparent. Bridging this gap is the concept of a Lyapunov function $V$ (total energy of the system), which plays a pivotal role in the stability analysis of dynamical systems. By incorporating the objective function $f$ into the Lyapunov function, the substantial dissipation of energy leads to a local minimum of $V$ and hence $f$. 
 
As a counterpart to the GD method, the projected gradient method is intrinsically linked to a PSP with a constraint set\footnote{Note that since the constraint set is time-independent for a standard constrained optimization problem, we will restrict our discussion to a PSP with a fixed set.}, as established in \cite{2016PerturbedSweepingProcess}. However, it is not evident whether the convergence conclusions drawn for unconstrained stochastic approximations remain valid in the context of constrained problems. In fact, this uncertainty arises from the nonsmooth characteristics inherent to projected gradient methods for constrained optimization. 

Furthermore, there is a natural inclination to employ a nonsmooth Lyapunov function that encapsulates the complexity of the problem. Ideally, such a function would be decomposed into two components: a smooth part that corresponds to the vector field $\psi$, and a lower-semicontinuous part that accounts for the constraints. Unfortunately, this approach often encounters difficulties, as the nonsmoothness can impede the straightforward application of traditional Lyapunov theory. 

In fact, optimal control of a PSP has been a well-studied problem, which comes from the application to the crowd motion model. A number of theoretical results have been developed \cite{2019OptimalcontrolPSP,2022PSPNonsmooth,2022GlobalAsympPSP}. Numerical analysis \cite{2011NumericalSchemeforPSP,2013ConvergenceOrder} on discretization of a continuous-time PSP is aimed at deriving the convergence order of the numerical scheme towards the continuous dynamics within a finite time. These results, however, do not provide the Lyapunov properties of $\omega$-limit sets of a PSP and fail in the infinite-time asymptotic analysis.

In this article, we develop dynamical systems theory with respect to constrained optimization schemes, aimed at providing a general framework for convergence analsyis. Specifically, the contributions of this work can be summarized as follows:
\begin{itemize}
  \item We provide a Lyapunov analysis for a PSP with a fixed constraint set. We show that if a PSP is a gradient-like dynamical system with a compact convex set, the $\omega$-limit set of any initial point $x$ is contained in the fixed point set of the corresponding Lyapunov function. 
  \item We establish the connection between a PSP and its Euler discretization and show that the discrete iterations converge to an internally chain transitive set of the PSP, which is similar to the behavior of unconstrained stochastic approximation. Furthermore, we develop the Lyapunov theory for such an iterative method. 
  \item By utilizing the theory of Lyapunov pairs, we provide several examples of convergence analysis of projected variants of popular gradient-based methods. Based on the established theoretical results, we develop a provably convergent distributed projected compressed gradient descent scheme for distributed nonconvex optimization. 
  \item Numerical simulations are conducted to verify the validity of the theoretical analysis. Results show that the projected algorithms (including the distributed scheme) succeed in converging to local minima within the constraint set.
\end{itemize}

The rest of the article is organized as follows. Basic concepts and notation are introduced in \cref{sec:Basic-concepts}. Subsequently, the primary theoretical results are demonstrated and derived in \cref{sec:analysis}. We provide examples of applications to optimization in \cref{sec:appl}. Numerical simulation results are presented in \cref{sec:simulation} and \cref{sec:conclusion} concludes this article.
\vspace{-6pt}
\section{Basic Concepts and Notation}
\label{sec:Basic-concepts}

In this section, we provide some notation and basic concepts (especially in the theory of dynamical systems) to be used throughout the article.

Let $X$ be a topological space, $\mathbb{R}^+$ be the semigroup of nonnegative real numbers and $\mathbb{T} \subseteq \mathbb{R}^+$ be a subsemigroup of the additive group. A triplet $(X,\mathbb{T},\pi)$, where $\pi: \mathbb{T}\times X \to X$ is a continuous mapping satisfying $\pi(0,x)=x$ and $\pi(s,\pi(t,x)) = \pi(s+t,x)$ for all $x\in X$ and $s,t\in \mathbb{T}$, is called a (continuous) dynamical system. Given $x \in X$, the set $\Upsilon_x := \pi(\mathbb{T}, x)$ is called a trajectory (associated with $x$). A point $x\in X$ is called a fixed point of $(X,\mathbb{T},\pi)$ if $\pi(t,x)=x$ for all $t\in\mathbb{T}$. A discrete dynamical system where $\mathbb{T} \subseteq \mathbb{Z}$ is called a cascade. 

A nonempty set $M \subseteq X$ is called (positively) invariant with respect to a dynamical system $(X,\mathbb{T},\pi)$ if $\pi(t, M) \subset M$ for every ($t \ge 0$) $t \in \mathbb{T}$. Let $J \subseteq X$. The set 
\begin{equation}
  \omega(J) := \bigcap_{t\ge 0} \overline{\bigcup_{s\ge t} \pi(s,J)},
\end{equation}
where $\bar{A}$ denotes the closure of a set $A$, is called the $\omega$-limit set for $J$. An equivalent definition of the $\omega$-limit set is 
\begin{equation}
  \omega(J) = \{u\in X: \exists x\in J, \exists t_n \to \infty, \pi(t_n,x)\to u \}.
\end{equation}

Let $\varSigma \subseteq X$ be a compact positively invariant subset of a metric space $(X,d)$, $\varepsilon > 0$, and $t > 0$. The collection $\{x = x_0, x_1, x_2, \dots, x_k = y;t_0, t_1,\dots,t_k\}$ of points $x_i \in \varSigma$ and the numbers $t_i \in \mathbb{T}$ such that $t_i \ge t$ and the distance $d(\pi(t_i,x_i), x_{i+1}) < \varepsilon$, $(i = 0, 1,\dots,k-1)$ is called an $(\varepsilon, t, \pi)$-chain joining the points $x$ and $y$. The set $\varSigma$ is called \emph{internally chain transitive} if for all $a,b \in \varSigma$, $\varepsilon > 0$ and $t > 0$,  there exists an $(\varepsilon, t, \pi)$-chain in $\varSigma$ connecting $a$ and $b$.

A dynamical system $(X, \mathbb{T}, \pi)$ is said to be a gradient-like dynamical system if it has a global Lyapunov function $V: X \to \mathbb{R}$, i.e., $V$ is continuous and satisfies $V(\pi(t,x)) \le V(x)$ for all $x \in X$ and $t \in \mathbb{T}$. 

Let $S$ be a nonempty subset of a Hilbert space $\mathcal{H}$, and $x \in \mathcal{H}$. The distance between $x$ and $S$ is expressed by
\begin{equation}
  d(x;S) := \inf_{y\in S} \|x-y\|.
\end{equation}
The set of nearest points of $x$ in $S$ is defined by
\begin{equation}
  P_S(x) := \left\{u\in S: \|x-u\| = d(x;S) \right\}.
\end{equation}
For a convex subset $S \subseteq \mathcal{H}$ and $x\in \mathcal{H}$, the normal cone to $S$ at $x$ is $N_S(x) = \{v \in S: \langle v, y-x \rangle \le 0, \forall y \in S\}$. Correspondingly, we use $\mathcal{T}_S(x)$ to represent the tangent cone. Given a constrained optimization problem $\min_{x\in C} f(x)$ for a closed and convex set $C \subseteq \mathbb{R}^m$ and a differentiable function $f$, the set of Karush-Kuhn-Tucker (KKT) points is defined as $\mathcal{L}:= \{x \in C: 0 \in \nabla f(x) + N_C(x)\}$.

A sequence $\{y(t)\}_{t\in \mathbb{R}}$ of elements in $\mathcal{H}$ is said to converge to a set $J$ if $d(y(t);J) \to 0$ as $t\to +\infty$, denoted by $y(t) \to J$. Given $\lambda \in \mathbb{R}$ and a Hilbert space $(\mathcal{H},|\cdot|)$, we say that $f: \mathcal{H} \to \mathbb{R}$ is $\lambda$-convex if $f(x) - \frac{\lambda}{2} |x|^2$ is convex.

Given a nonempty set $C$, we use $\mathcal{I}_C$ to represent the indicator function of $C$, i.e., $\mathcal{I}_C(x) = 0$ if $x \in C$ and $\mathcal{I}_C(x) = +\infty$ otherwise. For a lower semi-continuous function $\varphi: \mathcal{H} \to \mathbb{R}$ on a Hilbert space $\mathcal{H}$, a vector $\xi \in \mathcal{H}$ is called a Fréchet subgradient, written $\xi \in \partial_F \varphi(x)$, at $x$ if 
\begin{equation}
  \varphi(y) \ge \varphi(x) + \langle\xi,y-x\rangle + o(\|y-x\|), \ \forall y \in \mathcal{H}.
\end{equation}
We use $\mathbb{B}(x,r)$ to denote a closed ball in a metric space centered at $x$ with radius $r$.

Let $\Upsilon(t,x)$ be a trajectory of a dynamical system and $\Lambda$ be a subset of a metric space $X$. A continuous function $V: X \to \mathbb{R}$ is called a Lyapunov function for a set $\Lambda$, if $V (y) < V (x)$ for all $x \in X \backslash \Lambda$, $y \in \Upsilon(t,x)$, $t > 0$, and $V (y) \le V (x)$ for all $x \in \Lambda$, $y \in \Upsilon(t,x)$, and $t \ge 0$. 


Throughout the paper, two forms of the well-known Gronwall's inequalities \cite{2011FunctionalAnalysis} will be used.
\begin{itemize}
  \item \textbf{The classical differential form.} Assume that $u: [0, T) \to \mathbb{R}$ is continuously differentiable, $T\in (0, \infty)$, and satisfies the differential inequality
  \begin{equation}
    \frac{du}{dt} \le a(t) u(t) + b(t),
  \end{equation}
  for some integrable functions $a,b$ on $(0,T)$. Then, $u$ satisfies the pointwise bound 
  \begin{equation}
    u(t) \le e^{A(t)}u(0) + \int_0^t b(s) e^{A(t) - A(s)} ds,
  \end{equation}
  where $A(t) := \int_0^t a(s) ds$ for all $t \in [0,T)$. 
  \item \textbf{The discrete form.} Consider a sequence of real numbers $\{u_n\}$ such that
  \begin{equation}
    u_{n+1} \le a_{n+1}u_n + b_{n+1}, \quad \forall n \ge 0,
  \end{equation}
  where $\{a_n\}$ and $\{b_n\}$ are two given sequences of real numbers and $\{a_n\}$ is furthermore positive. Then
  \begin{equation}
    u_n \le A_n u_0 + \sum_{k=1}^n A_{k,n}b_k, \quad \forall n\ge 0,
  \end{equation}
  where $A_n := \prod_{k=1}^n a_k$, $A_{k,n}:=A_n/A_k$.
\end{itemize}

The stability analysis of the nonsmooth dynamics of a PSP naturally requires nonsmooth Lyapunov functions.

\begin{definition}[A variant of Definition 1 in \cite{2012LyapunovPair}]
  \label{definition:Lyapunov-pair}
  Let $\mathcal{H}$ be a Hilbert space. Let functions $V,W: \mathbb{R} \times \mathcal{H} \to \mathbb{R}$ be lower semi-continuous, with $W \ge 0$. We say that $(V, W)$ is a time-dependent Lyapunov pair for a dynamical system $(X, \mathbb{R}^+, \pi)$ ($X \subseteq \mathcal{H}$) if for all $x_0 \in X$ and $\forall t \ge 0$, 
  \begin{equation}
    V(t,x(t)) + \int_{0}^{t} W(\tau,x(\tau)) d\tau \le V(0,x_0),
  \end{equation}
  where $x(t) = \pi(t,x_0)$. 
\end{definition}

Identifying a suitable Lyapunov pair for nonsmooth dynamical systems is inherently complex, primarily due to the difficulty in determining the supremum of the Lie derivatives of potential Lyapunov functions. The challenge arises from the requirement to evaluate the supremum within the context of the Fréchet subdifferential, which encapsulates a broader set of candidates than the traditional derivative would allow. 

Fortunately, the following lemma provides a powerful tool to settle the problem for a PSP as \cref{eqn:sweeping-process}. 
\begin{lemma}
  \label{lemma:Lyapunov-pair}
  Let $\mathcal{H}$ be a Hilbert space. Let functions $V,W: \mathbb{R} \times \mathcal{H} \to \mathbb{R}$ be lower semi-continuous, with $W \ge 0$. $(V,W)$ is a time-dependent Lyapunov pair if and only if for all $t \ge 0$, $x \in \mathcal{H}$ and $\xi \in \partial_F V(t,x)$, we have
  \begin{equation}
    \min_{v \in N_C(x) \cap \mathbb{B}(0,\|\psi(t,x)\|)} \langle \xi, -\psi(t,x)-v \rangle + W(t,x) \le 0.
  \end{equation}
\end{lemma}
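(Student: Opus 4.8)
I would prove the equivalence as two implications, using throughout the well-posedness and regularity theory of the perturbed sweeping process \cref{eqn:sweeping-process}: from each point of the closed convex set $C$ it admits a unique, locally Lipschitz solution $x(\cdot)=\pi(\cdot,x_0)$ that stays in $C$, whose right derivative exists at every $t$ and equals $P_{\mathcal{T}_C(x(t))}\bigl(-\psi(t,x(t))\bigr)$. Writing $v(t):=P_{N_C(x(t))}\bigl(-\psi(t,x(t))\bigr)$, Moreau's decomposition of $-\psi(t,x(t))$ into tangential and normal parts gives $\dot x^{+}(t)=-\psi(t,x(t))-v(t)$ with $v(t)\in N_C(x(t))$ and $\|v(t)\|\le\|\psi(t,x(t))\|$ (projection onto a closed convex cone is nonexpansive and fixes the origin). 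Hence $v(t)\in N_C(x(t))\cap\mathbb{B}\bigl(0,\|\psi(t,x(t))\|\bigr)$: the normal part of the velocity can never leave this ball, which is exactly why it is the right feasible set in the criterion---minimizing over it loses nothing while making the condition as sharp as possible. We read $\langle\xi,-\psi(t,x)-v\rangle$ for $\xi\in\partial_F V(t,x)\subseteq\mathbb{R}\times\mathcal{H}$ as $\langle\xi,(1,-\psi(t,x)-v)\rangle$, matching the time-rate $1$ of $t\mapsto(t,x(t))$.

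\textbf{Necessity.} Assume $(V,W)$ is a Lyapunov pair. Fix $x\in C$, $t\ge 0$, $\xi\in\partial_F V(t,x)$, and let $y$ be the trajectory of \cref{eqn:sweeping-process} through $x$ at time $t$. Over $[t,t+h]$ the Lyapunov-pair inequality reads $V(t+h,y(t+h))+\int_t^{t+h}W(\tau,y(\tau))\,d\tau\le V(t,x)$, while the Fréchet subgradient inequality together with $y(t+h)=x+h\,\dot x^{+}(t)+o(h)$ gives $V(t+h,y(t+h))\ge V(t,x)+h\,\langle\xi,(1,\dot x^{+}(t))\rangle+o(h)$. Subtracting $V(t,x)$, dividing by $h$, letting $h\downarrow 0$, and using lower semicontinuity of $W$ and continuity of $y$ to get $\liminf_{h\downarrow 0}\tfrac1h\int_t^{t+h}W(\tau,y(\tau))\,d\tau\ge W(t,x)$, we obtain $\langle\xi,(1,-\psi(t,x)-v(t))\rangle+W(t,x)\le 0$. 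Since $v(t)\in N_C(x)\cap\mathbb{B}(0,\|\psi(t,x)\|)$, this already gives the minimized inequality.

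\textbf{Sufficiency.} This is the substantive direction, and the obstruction is structural: the subgradient inequality bounds $V$ only from below, so one cannot simply assert ``$\dot V\le 0$, hence $V$ decreases.'' The plan is to reduce the claim to a classical nonsmooth invariance principle. First, replace \cref{eqn:sweeping-process} by the inclusion $\dot x(t)\in F(t,x(t)):=\{-\psi(t,x(t))-v:\ v\in N_C(x(t))\cap\mathbb{B}(0,\|\psi(t,x(t))\|)\}$, which has closed convex values, is locally bounded along solutions, and---by the fact recalled above---has exactly the same (unique) solutions. Next, adjoin the scalar state $r(t)=\int_0^t W(\tau,x(\tau))\,d\tau$ (so $\dot r=W(t,x)$) and the clock $\dot s=1$, and set $\widetilde V(s,x,r):=V(s,x)+r$; the Lyapunov-pair inequality is then precisely the statement that $\widetilde V$ is nonincreasing along every trajectory of this autonomous inclusion, and the hypothesized infinitesimal inequality translates verbatim into $\min_{w\in\widetilde F(s,x,r)}\langle\zeta,w\rangle\le 0$ for all $\zeta\in\partial_F\widetilde V(s,x,r)$. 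By the nonsmooth characterization of functions that decrease along solutions of an inclusion with bounded right-hand side and lower semicontinuous data (in the spirit of \cite{2012LyapunovPair}), this yields the desired monotonicity; uniqueness of solutions of \cref{eqn:sweeping-process} makes the a priori ``weak'' (existential) decrease coincide with decrease along the actual trajectory.

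\textbf{Main obstacle.} The heart of the argument is that last step---passing from the pointwise subdifferential inequality to the trajectory-wise inequality for a merely lower semicontinuous $V$. I would carry it out by one of two standard routes: a Moreau--Yosida/inf-convolution regularization $V_\lambda$ along which the inequality can be propagated by smooth means, followed by the limit $\lambda\downarrow 0$; or a direct contradiction argument in which lower semicontinuity produces a ``first violation time'' $\bar t$ at which the lower right Dini derivative of $t\mapsto V(t,x(t))+\int_0^t W(\tau,x(\tau))\,d\tau-\delta t$ is nonnegative, and a proximal-aiming construction then exhibits a proximal subgradient of $V$ at $(\bar t,x(\bar t))$ contradicting the hypothesis. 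The remaining steps are routine bookkeeping: well-posedness of \cref{eqn:sweeping-process} via the catching-up algorithm (needing $\psi$ Lipschitz and $C$ closed convex), measurability and local integrability of $t\mapsto W(\tau,x(\tau))$ along solutions for differentiating the integral term, and carrying the time variable through the autonomous augmented state $(s,x,r)$.
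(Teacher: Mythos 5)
Your proposal is correct and follows essentially the same route as the paper, which proves this lemma purely by citation: the necessity direction via the Fr\'echet subgradient inequality along the trajectory (using that the velocity is $P_{\mathcal{T}_C(x)}(-\psi)$, so the normal part lies in $N_C(x)\cap\mathbb{B}(0,\|\psi(t,x)\|)$ by Moreau's decomposition) is exactly the content of \cite[pp.~300--301, Proposition~5]{1984DifferentialInclusions}, and the sufficiency direction you correctly identify as the crux is precisely \cite[Theorem~5.1]{2019LyapunovMonotone}, whose proof proceeds by the kind of nonsmooth monotonicity/proximal-aiming argument you sketch. The only points worth tightening are that the paper's Lyapunov-pair definition is anchored at time $0$, so your restart of the inequality at an arbitrary time $t$ implicitly uses the two-time version (justified here by uniqueness of solutions), and that local integrability of $\tau\mapsto W(\tau,x(\tau))$ for merely lower semicontinuous $W$ deserves the explicit mention you give it.
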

\begin{proof}
  A combination of \cite[Theorem 5.1]{2019LyapunovMonotone} and \cite[pp. 300-301, Proposition 5]{1984DifferentialInclusions}.
\end{proof}

\section{From Continuous Dynamics to Cascades}
\label{sec:analysis}

Given a compressor $\vartheta$, a projected GD algorithm with compressed gradients has a conceptual form expressed as 
\begin{equation}
  \label{eqn:compressed-form}
  z_{n+1} = P_{\mathcal{K}}[z_n - \alpha_{n+1} (\vartheta(\nabla f(z_n)) + \xi_{n})],
\end{equation}
where $\mathcal{K}$ is the constraint set and $\xi_n$ is random perturbation. Denote the compression residual error by $r_n := \vartheta(\nabla f(z_n)) - \nabla f(z_n)$. \cref{eqn:compressed-form} can be transformed into
\begin{equation}
  z_{n+1} = P_{\mathcal{K}}[z_n - \alpha_{n+1} (\nabla f(z_n) + r_n + \xi_{n})].
\end{equation}
Regarding $r_n + \xi_n$ as random perturbations, we can naturally associate the discrete evolutionary equation \cref{eqn:compressed-form} with a continuous-time constrained dynamical system
\begin{equation}
  \frac{dz}{dt} \in - \nabla f(z) - N_{\mathcal{K}}(z),
\end{equation}
in which we select $\alpha_n$ as the step size for discretization. In this article, we consider its general form as follows:
\begin{equation}
  \label{eqn:original-problem}
  \frac{dz}{dt} \in -\psi(t,z) - N_{\mathcal{K}}(z).
\end{equation}
Clearly, such a constrained non-autonomous dynamical system projects the continuous-time dynamics into $\mathcal{K}$. Moreover, this differential inclusion cannot be viewed as a variational inequality problem due to its non-autonomous nature. Indeed, the dynamics are covered by a topic termed the \emph{perturbed sweeping process}, which will be discussed in detail below.

\subsection{Results on Perturbed Sweeping Processes}
We first present sufficient conditions for the existence and uniqueness of a PSP. 
\begin{condition}[sweeping-regular]
  \label{condition:sweeping-regular}
  Let $\mathcal{H}$ and $\mathcal{F}$ be Hilbert spaces. A function $\psi: \mathbb{R}\times \mathcal{H} \to \mathcal{F}$ is said to be sweeping-regular on a pair $(I,C)$ for $I\subseteq \mathbb{R}$ and $C\subseteq \mathcal{H}$ if 
  \begin{itemize}
    \item $\forall \eta>0$, there exists an integrable nonnegative function $L_\eta(t): I \to \mathbb{R}$ such that, for all $t$ and for all $\max\{\|x\|,\|y\|\} < \eta$,
    \begin{equation}
      \|\psi(t,x) - \psi(t,y)\|_{\mathcal{F}} \le L_\eta(t) \|x-y\|_{\mathcal{H}};
    \end{equation}
    \item there exists an integrable nonnegative function $\beta: I \to \mathbb{R}$ such that, for all $t$ and for all $x \in C$, $\|\psi(t,x)\|_{\mathcal{F}} \le \beta(t)(1+\|x\|_{\mathcal{H}})$.
  \end{itemize}
\end{condition}
We begin with a useful lemma ensuring that a composite function is sweeping-regular.
\begin{lemma}
  \label{lemma:composite-sweeping-regular}
  Let $\mathcal{H}_n$ be a sequence of Hilbert spaces, $\psi_n:\mathbb{R}\times \mathcal{H} \to \mathcal{H}_n$ for $n=1,2,\dots,N$, and $\mathcal{H}=\mathcal{H}_1\times \mathcal{H}_2 \times \cdots \times \mathcal{H}_N$. If each $\psi_n$ is sweeping-regular on $(I,C)$, the composite function $\psi(t,x)=(\psi_1(t,x_1),\psi_2(t,x_2),\dots,\psi_N(t,x_N))$ is sweeping-regular on $(I,C)$, where $x=(x_1,x_2,\dots,x_N) \in \mathcal{H}$.
\end{lemma}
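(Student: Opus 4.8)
The plan is to verify the two defining bullets of \cref{condition:sweeping-regular} for the composite map $\psi$ directly, block by block, using the product Hilbert-space norm $\|x\|_{\mathcal{H}}^2=\sum_{n=1}^{N}\|x_n\|_{\mathcal{H}_n}^2$ and the analogous norm on the target space (which here is itself a finite product of the $\mathcal{H}_n$). The only ingredients are elementary: each coordinate projection $x\mapsto x_n$ is $1$-Lipschitz, so $\|x_n\|_{\mathcal{H}_n}\le\|x\|_{\mathcal{H}}$; for nonnegative reals $(\sum_n a_n^2)^{1/2}\le\sum_n a_n$; and a finite sum of integrable nonnegative functions on $I$ is again integrable and nonnegative. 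There is no genuine analytic difficulty here — the argument is bookkeeping with norms — so the proof will be short; this lemma serves mainly as the block-separability tool used later for the distributed scheme.

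For the local Lipschitz bullet, fix $\eta>0$ and, for each $n$, let $L_{\eta,n}:I\to\mathbb{R}$ be the integrable nonnegative function supplied by the sweeping-regularity of $\psi_n$. Because $\max\{\|x\|_{\mathcal{H}},\|y\|_{\mathcal{H}}\}<\eta$ forces $\max\{\|x_n\|_{\mathcal{H}_n},\|y_n\|_{\mathcal{H}_n}\}<\eta$ for every $n$, one gets $\|\psi(t,x)-\psi(t,y)\|\le\sum_{n=1}^{N}\|\psi_n(t,x_n)-\psi_n(t,y_n)\|_{\mathcal{H}_n}\le\sum_{n=1}^{N}L_{\eta,n}(t)\|x_n-y_n\|_{\mathcal{H}_n}\le\big(\sum_{n=1}^{N}L_{\eta,n}(t)\big)\|x-y\|_{\mathcal{H}}$, so $L_\eta(t):=\sum_{n=1}^{N}L_{\eta,n}(t)$ does the job. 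For the linear-growth bullet, let $\beta_n:I\to\mathbb{R}$ be the integrable nonnegative function associated with $\psi_n$; the same two inequalities give $\|\psi(t,x)\|\le\sum_{n=1}^{N}\beta_n(t)(1+\|x_n\|_{\mathcal{H}_n})\le\big(\sum_{n=1}^{N}\beta_n(t)\big)(1+\|x\|_{\mathcal{H}})$, so $\beta(t):=\sum_{n=1}^{N}\beta_n(t)$ works. Both $L_\eta$ and $\beta$ are integrable and nonnegative since $N<\infty$.

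The one point that deserves attention — and the closest thing to an obstacle — is matching the constraint set to the block structure: the growth bound for $\psi_n$ is postulated on $C$, and one must ensure it is applied to the block $x_n$ that actually appears inside $\psi(t,x)$. This is immediate when $C$ is a product $C=C_1\times\cdots\times C_N$, or when the hypothesis for each $\psi_n$ is understood with $C$ replaced by its $n$-th coordinate projection; under either convention the two estimates above apply verbatim and yield that $\psi$ is sweeping-regular on $(I,C)$.
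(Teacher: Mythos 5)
Your proof is correct and follows the same route the paper takes, which simply invokes the triangle inequality for Hilbert spaces; you have merely written out the bookkeeping explicitly, including the valid observation that the growth bound on $C$ must be read blockwise (via the coordinate projections of $C$ or a product structure) for the estimate to apply to each $\psi_n(t,x_n)$. No changes needed.
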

\begin{proof}
  The result is a straightforward consequence of the triangle inequality for Hilbert spaces.
\end{proof}

Given this condition, we have the following lemma ensuring the existence and uniqueness of a solution to \cref{eqn:original-problem}:
\begin{lemma}{\cite[Theorem 2.1]{2018GlobalStability}}
  \label{lemma:existence-solution-sweeping}
  Let $\mathcal{H}$ be a Hilbert space, $C$ be a closed and convex subset of $\mathcal{H}$, $I$ be a subset of $\mathbb{R}$, and $\psi: \mathbb{R} \times \mathcal{H} \to \mathcal{H}$ satisfying \cref{condition:sweeping-regular} for $(I,C)$. Then the PSP with $x(0) \in C$
  \begin{equation}
    \label{eqn:sweeping-process}
    -\frac{dx}{dt} \in \psi(t,x) + N_{C}(x), \ \text{a.e. }t \in I,
  \end{equation}
  has a unique absolutely continuous solution $x(t)$ defined on $I$. Moreover, for almost everywhere $t \in I$,
  \begin{equation}
    \|\dot{x}(t) + \psi(t,x(t))\| \le D \beta(t), \quad \|\psi(t,x(t))\| \le D \beta(t),
  \end{equation}
  for some constant $D = D(x(0),\int_{I} \beta(s) ds) > 0$.
\end{lemma}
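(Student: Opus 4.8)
The plan is to regard the PSP \cref{eqn:sweeping-process} as a differential inclusion $\dot x(t)\in -\psi(t,x(t))-N_C(x(t))$ driven by the fixed maximal monotone operator $N_C$ (the subdifferential of $\mathcal I_C$), and to run the classical three-step programme of evolution theory, adapted to a nonautonomous perturbation $\psi$ whose Lipschitz and growth moduli are only integrable in $t$: an a priori bound, existence by time-discretization, uniqueness by monotonicity, and finally the pointwise norm estimates via the minimal-selection (polar-cone) structure. For the \emph{a priori bound}, fix any base point $p\in C$. Any solution satisfies $x(t)\in C$ for all $t$ (as $N_C(y)=\emptyset$ for $y\notin C$), so $-\dot x(t)-\psi(t,x(t))\in N_C(x(t))$ and the defining inequality of the normal cone give $\langle\dot x(t)+\psi(t,x(t)),x(t)-p\rangle\le 0$, hence $\tfrac12\tfrac{d}{dt}\|x(t)-p\|^2\le\|\psi(t,x(t))\|\,\|x(t)-p\|\le\beta(t)\bigl(1+\|p\|+\|x(t)-p\|\bigr)\|x(t)-p\|$ by \cref{condition:sweeping-regular}. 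Setting $u(t)=\|x(t)-p\|$ and invoking the differential Gronwall inequality stated in \cref{sec:Basic-concepts} yields $\sup_{t\in I}\|x(t)\|\le R$ with $R=R(x(0),\int_I\beta)$; this $R$ (up to fixed factors) is the constant $D$.

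For \emph{existence}, on a bounded subinterval I would run the catching-up / semi-implicit Euler scheme $x^h_{k+1}=P_C\bigl(x^h_k-\int_{t_k}^{t_{k+1}}\psi(s,x^h_k)\,ds\bigr)$, verify that the piecewise-affine interpolants inherit a bound of the same Gronwall type (so the scheme is well defined even though $\beta$ is merely integrable), and that they are equi-absolutely-continuous. Arzel\`a--Ascoli together with weak-$L^1$ compactness (weak-$L^2$ on compacts) of the derivatives, the local Lipschitz continuity of $\psi$, and the demiclosedness of the graph of the maximal monotone operator $N_C$ let me pass to the limit; the inclusion is most conveniently preserved in its integrated/variational form $\langle\dot x(t)+\psi(t,x(t)),y-x(t)\rangle\ge 0$ for a.e.\ $t$ and all $y\in C$. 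If $I$ is unbounded, I patch the subinterval solutions, the uniform bound of the previous paragraph (controlled by $\int_I\beta<\infty$) ruling out blow-up.

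For \emph{uniqueness}, given two solutions $x,y$ with $x(0)=y(0)$, both confined to a ball $\mathbb B(0,\eta)$ by the a priori bound, I compute $\tfrac12\tfrac{d}{dt}\|x-y\|^2=\langle\dot x-\dot y,x-y\rangle\le\langle-\psi(t,x)+\psi(t,y),x-y\rangle\le L_\eta(t)\|x-y\|^2$, discarding the normal-cone terms by monotonicity of $N_C$, and conclude $x\equiv y$ by Gronwall. For the \emph{pointwise estimates}, the minimal-selection property of this inclusion gives $\dot x(t)=P_{\mathcal T_C(x(t))}\bigl(-\psi(t,x(t))\bigr)$ a.e.; since $\mathcal T_C(x)$ is a closed convex cone containing $0$, $\|\dot x(t)\|\le\|\psi(t,x(t))\|\le\beta(t)(1+\|x(t)\|)\le D\beta(t)$, while the Moreau decomposition $-\psi=P_{\mathcal T_C}(-\psi)+P_{N_C}(-\psi)$ gives $\|\dot x(t)+\psi(t,x(t))\|=\|P_{N_C(x(t))}(-\psi(t,x(t)))\|\le\|\psi(t,x(t))\|\le D\beta(t)$, absorbing fixed constants into $D$.

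The main obstacle I expect is the existence step: keeping the discrete scheme well posed and uniformly bounded when the growth and Lipschitz moduli $\beta,L_\eta$ are only integrable in $t$ (so naive uniform constants are unavailable), and, above all, justifying that the limit of the interpolants genuinely satisfies the inclusion --- that is, passing the unbounded operator $N_C$ through a weak limit, which is precisely what forces the variational/integrated formulation and the use of demiclosedness of $N_C=\partial\mathcal I_C$ rather than a pointwise argument.
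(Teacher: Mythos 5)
The paper does not prove this lemma at all: it is imported verbatim as \cite[Theorem 2.1]{2018GlobalStability}, so there is no in-paper argument to compare against. Your reconstruction is the standard Moreau catching-up programme for a perturbed sweeping process with a fixed closed convex set, and it is sound in outline: the a priori bound via $\langle\dot x+\psi,x-p\rangle\le 0$ and differential Gronwall, existence by the semi-implicit projected scheme with Arzel\`a--Ascoli plus Dunford--Pettis weak compactness of the (equi-integrable) derivatives and demiclosedness of $N_C=\partial\mathcal I_C$ in the variational formulation, uniqueness by discarding the normal-cone terms through monotonicity and absorbing the rest into $L_\eta(t)$, and the velocity bounds from the slow-solution identity $\dot x=P_{\mathcal T_C(x)}(-\psi)$ together with the Moreau decomposition. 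Two small remarks. First, the growth bound in \cref{condition:sweeping-regular} is only assumed for $x\in C$; this suffices because both the continuous solution and the discrete iterates $x^h_k$ live in $C$, but it is worth stating explicitly since your a priori estimate quietly uses it. Second, the bound $\|\dot x(t)+\psi(t,x(t))\|\le D\beta(t)$ genuinely requires the minimal-selection property (the normal cone is unbounded, so an arbitrary selection gives nothing); you invoke it correctly, and the paper itself relies on the same fact later via \cite[Corollary 2]{2006Equivalence}. An alternative, slightly more self-contained route to the same estimate is to read it off at the discrete level from nonexpansiveness of $P_C$ ($\|x^h_{k+1}-x^h_k\|\le\|\int_{t_k}^{t_{k+1}}\psi(s,x^h_k)\,ds\|$) and pass it to the limit, which avoids quoting the slow-solution theorem. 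Your identification of the limit passage as the genuinely delicate step is accurate.
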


Note that an absolutely continuous function $x(t)$ is said to be a solution to the sweeping process \cref{eqn:sweeping-process} on an interval $I \subseteq \mathbb{R}$ if $x(t) \in C$ for a.e. $t \in I$ and $\dot{x}(t)$ satisfies \cref{eqn:sweeping-process}. Since we will discuss properties of $\omega$-limit sets of a PSP, it is necessary to extend the solution to the entire real line $\mathbb{R}$ (or at least $\mathbb{R}^+$). By \cite[Corollary 2]{2006Equivalence}, the differential inclusion \cref{eqn:sweeping-process} is equivalent to the ODE
\begin{equation}
  \dot{x}(t) = P_{\mathcal{T}_C(x(t))} [-\psi(t,x)], \quad \text{a.e. } t\in I,
\end{equation}
where $P_{\mathcal{T}_C(x)}$ denotes the projection into the tangent cone of $C$ at $x$. By standard procedure to extend a solution of an ODE, we have the following lemma.
\begin{lemma}
  \label{lemma:global-solution-sweeping-process}
  Suppose for every $\tau > 0$, $\psi$ is sweeping-regualr on $([-\tau,\tau], C)$ or $([0,\tau], C)$. Then the solution of \cref{eqn:sweeping-process} is defined for all $t\in \mathbb{R}$ or $t\in \mathbb{R}^+$, respectively. 
\end{lemma}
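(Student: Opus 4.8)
The plan is a standard continuation argument that leans entirely on the uniqueness clause and the a priori bound already supplied by \cref{lemma:existence-solution-sweeping}, rather than on any fresh analysis of the dynamics. I will describe the two-sided case (hypothesis on $([-\tau,\tau],C)$ for every $\tau>0$); the one-sided case is word-for-word identical with $[-n,n]$ replaced by $[0,n]$ throughout.

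First I would fix, for each integer $n\ge 1$, the interval $I_n:=[-n,n]$. By hypothesis $\psi$ is sweeping-regular on $(I_n,C)$, so \cref{lemma:existence-solution-sweeping} applies on $I_n$ and produces a unique absolutely continuous solution $x^{(n)}:I_n\to\mathcal H$ of \cref{eqn:sweeping-process} with $x^{(n)}(0)=x(0)\in C$. For $m>n$, the restriction $x^{(m)}|_{I_n}$ is again an absolutely continuous solution of \cref{eqn:sweeping-process} on $I_n$ with the same initial value, so uniqueness in \cref{lemma:existence-solution-sweeping} forces $x^{(m)}|_{I_n}=x^{(n)}$. Consequently the family $\{x^{(n)}\}$ is mutually consistent, and $x(t):=x^{(n)}(t)$ for $|t|\le n$ is a well-defined function on $\mathbb R=\bigcup_{n\ge 1}I_n$. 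It remains to verify that this glued map is itself a solution on $\mathbb R$: absolute continuity on any compact interval is immediate since that interval lies in some $I_n$ on which $x$ agrees with $x^{(n)}$; one has $x(t)\in C$ for a.e. $t$ (indeed for all $t$, by continuity of $x$ and closedness of $C$); and the inclusion $-\dot x(t)\in\psi(t,x(t))+N_C(x(t))$ holds at a.e.\ $t\in\mathbb R$ because it holds a.e.\ on each $I_n$. Uniqueness on $\mathbb R$ then follows since any global solution restricts on each $I_n$ to a solution that must coincide with $x^{(n)}$.

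An equivalent route, matching the phrase ``standard procedure to extend a solution of an ODE'', is to work with the reformulation $\dot x(t)=P_{\mathcal T_C(x(t))}[-\psi(t,x(t))]$ and invoke the classical Carath\'eodory continuation theorem: a maximal solution fails to extend past a finite time only if it eventually leaves every compact set, but the bound $\|\dot x(t)\|\le\|\dot x(t)+\psi(t,x(t))\|+\|\psi(t,x(t))\|\le 2D\beta(t)$ from \cref{lemma:existence-solution-sweeping} gives $\|x(t)\|\le\|x(0)\|+2D\int_0^{|t|}\beta(s)\,ds<\infty$ on every bounded interval, ruling out finite-time escape. There is no serious obstacle here; the one point requiring care — and worth flagging explicitly in the write-up — is that the functions $L_\eta$, $\beta$ and the constant $D=D(x(0),\int_{I_n}\beta)$ in \cref{condition:sweeping-regular} and \cref{lemma:existence-solution-sweeping} are allowed to depend on $n$. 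This is harmless because we only ever invoke them on one fixed compact $I_n$ at a time, and the passage to the global solution is controlled purely by uniqueness, not by any uniform-in-$n$ estimate. Finally, the one-sided statement is genuinely one-sided: without sweeping-regularity on negative intervals there is no reason the solution extends backward, so the conclusion is necessarily stated as $t\in\mathbb R^+$ in that case.
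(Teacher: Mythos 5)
Your proposal is correct, and your primary route is genuinely different from the paper's. The paper does not glue solutions by uniqueness; it passes to the equivalent ODE $\dot{x}(t)=P_{\mathcal{T}_C(x(t))}[-\psi(t,x(t))]$, bounds the right-hand side by $\beta(s)(1+\|x(s)\|)$ using the nonexpansiveness of the projection onto the (convex) tangent cone together with $P_{\mathcal{T}_C(x)}[0]=0$ and the linear-growth clause of \cref{condition:sweeping-regular}, applies the integral form of Gronwall's inequality to conclude that $x(t)$ stays in a compact ball on every bounded interval, and then invokes the standard no-escape-from-compact-sets continuation criterion. Your second route is essentially this same argument with the Gronwall step replaced by the ready-made derivative bound $\|\dot{x}(t)\|\le 2D\beta(t)$ from \cref{lemma:existence-solution-sweeping}; both deliver the needed a priori compactness, and your version is marginally shorter at the cost of importing the (interval-dependent) constant $D$. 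Your first route buys something more: by exploiting existence \emph{and} uniqueness on each $[-n,n]$ (or $[0,n]$) directly from \cref{lemma:existence-solution-sweeping} and gluing, you avoid any a priori estimate altogether, which is cleaner. The only caveat worth recording is that this route takes at face value the claim that \cref{lemma:existence-solution-sweeping} produces a solution on all of $[-\tau,\tau]$ from data at $t=0$, i.e.\ including backward-in-time evolution; sweeping processes are typically only forward well-posed, so the two-sided half of the statement rests on that reading of the cited existence theorem in both your argument and the paper's. Your remark that the constants $L_\eta$, $\beta$, $D$ may depend on the interval but are only ever used one compact interval at a time is exactly the right point to flag.
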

\begin{proof}
  Using the bound from \cref{condition:sweeping-regular}, we have
  \begin{equation}
    \|x(t)\| \le \|x(0)\| + \int_0^t \|P_{\mathcal{T}_C(x(t))}[-\psi(s,x(s))]\| ds.
  \end{equation}
  Since projection into a closed and convex set is nonexpansive, it follows that
  \begin{equation}
    \begin{aligned}
      &\|P_{\mathcal{T}_C(x(t))}[-\psi(s,x(s))]\| \\
      &= \|P_{\mathcal{T}_C(x(t))}[-\psi(s,x(s))] - P_{\mathcal{T}_C(x(t))}[0]\| \\
      &\le \|\psi(s,x(s))\| \le \beta(s)(1+\|x(s)\|).
    \end{aligned}
  \end{equation}
  Hence we obtain
  \begin{equation}
    \|x(t)\| \le \|x(0)\| + \int_0^t \beta(s)(1+\|x(s)\|) ds.
  \end{equation}
  Using the above variant of Gronwall's inequality implies
  \begin{equation}
    \|x(t)\| \le \|x(0)\|e^{B(t)} + \int_0^t \beta(s) e^{B(t)-B(s)} ds,
  \end{equation}
  where $B(t):= \int_0^t \beta(s) ds$. By the integrability of $\beta(t)$ as presented in \cref{condition:sweeping-regular}, $x(t)$ lies in a compact ball and the result follows by \cite[p. 52, Corollary 2.15]{2012odeAndDS}.
\end{proof}

In the subsequent analysis, we assume that the solution to the PSP is defined on the entire nonnegative real line $\mathbb{R}^+$. We firstly consider the straightforward case where $\psi$ is \emph{strongly monotone}.

\begin{lemma}
  \label{lemma:global-stable-sweeping-monotonicity}
  Let the conditions of \cref{lemma:global-solution-sweeping-process} hold. Assume that $\psi$ satisfies the condition for strong monotonicity, i.e.,
  \begin{equation}
    \label{eqn:strong-monotonicity}
    \langle \psi(t,x) - \psi(t,y), x-y \rangle \ge \gamma(t) \|x-y\|^2,
  \end{equation}
  for all $t \in \mathbb{R}$, $x,y \in \mathcal{H}$ and a nonnegative continuous function $\gamma: \mathbb{R} \to \mathbb{R}^+$ satisfying $\int_{T}^{+\infty} \gamma(\tau) d\tau = +\infty$ for any fixed $T$. Then the solution to the sweeping process \cref{eqn:sweeping-process} is globally stable, i.e., $\|x(t)-y(t)\| \to 0$ for two trajectories with arbitrary initial values $x_0,y_0 \in C$ as $t \to +\infty$. 
\end{lemma}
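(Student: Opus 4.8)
The plan is to monitor the squared distance between two trajectories and show it decays at a rate governed by $\gamma$. Fix two solutions $x(t),y(t)$ of \cref{eqn:sweeping-process} with $x(0)=x_0$, $y(0)=y_0\in C$, both defined on $\mathbb{R}^+$ by \cref{lemma:global-solution-sweeping-process}, and set $\rho(t):=\tfrac12\|x(t)-y(t)\|^2$. Since $x$ and $y$ are absolutely continuous and remain in a bounded set (the Gronwall bound derived in the proof of \cref{lemma:global-solution-sweeping-process}), $t\mapsto x(t)-y(t)$ is absolutely continuous and $\rho$ is the composition of an absolutely continuous map with a function that is Lipschitz on bounded sets; hence $\rho$ is absolutely continuous and, for a.e.\ $t$,
\begin{equation}
  \dot{\rho}(t)=\langle x(t)-y(t),\ \dot{x}(t)-\dot{y}(t)\rangle .
\end{equation}

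Next I would use the differential inclusion \cref{eqn:sweeping-process} to split the velocities. For a.e.\ $t$ there exist $v_x(t)\in N_C(x(t))$ and $v_y(t)\in N_C(y(t))$ with $-\dot{x}(t)=\psi(t,x(t))+v_x(t)$ and $-\dot{y}(t)=\psi(t,y(t))+v_y(t)$. Substituting,
\begin{equation}
  \dot{\rho}(t)=-\langle x(t)-y(t),\ \psi(t,x(t))-\psi(t,y(t))\rangle-\langle x(t)-y(t),\ v_x(t)-v_y(t)\rangle .
\end{equation}
The first term is bounded above by $-\gamma(t)\|x(t)-y(t)\|^2$ by the strong monotonicity hypothesis \cref{eqn:strong-monotonicity}. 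The second term is nonpositive after negation, i.e.\ $\langle x(t)-y(t),v_x(t)-v_y(t)\rangle\ge 0$, because the normal-cone operator of the convex set $C$ is monotone: since $y(t)\in C$ we have $\langle v_x(t),y(t)-x(t)\rangle\le 0$, since $x(t)\in C$ we have $\langle v_y(t),x(t)-y(t)\rangle\le 0$, and adding these gives the claim. Therefore $\dot{\rho}(t)\le -2\gamma(t)\rho(t)$ for a.e.\ $t$.

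Finally, apply the classical differential form of Gronwall's inequality with $a(t)=-2\gamma(t)$ and $b\equiv 0$ to obtain $\rho(t)\le \rho(0)\exp\!\big(-2\int_0^t\gamma(s)\,ds\big)$. By hypothesis $\int_0^{+\infty}\gamma(\tau)\,d\tau=+\infty$, so the exponent diverges to $-\infty$; hence $\rho(t)\to 0$ and consequently $\|x(t)-y(t)\|\to 0$ as $t\to+\infty$, which is exactly global stability. I expect the computation itself to be routine; the only points requiring care are the justification that $\rho$ is absolutely continuous with the chain rule valid a.e.\ (which rests on absolute continuity of the solutions and their boundedness) and the existence of the measurable normal-cone selections $v_x,v_y$, both of which are immediate from \cref{lemma:existence-solution-sweeping} and the definition of a solution to \cref{eqn:sweeping-process}.
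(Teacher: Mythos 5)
Your proposal is correct and follows essentially the same route as the paper: both exploit the monotonicity of the normal-cone operator of the convex set $C$ together with the strong monotonicity of $\psi$ to derive the differential inequality $\frac{d}{dt}\|x(t)-y(t)\|^2 \le -c\,\gamma(t)\|x(t)-y(t)\|^2$, and then conclude via the differential form of Gronwall's inequality and the divergence of $\int\gamma$. Your version is in fact slightly more careful on two points the paper glosses over — the absolute continuity of $\rho$ justifying the a.e.\ chain rule, and the explicit factor of $2$ in the Gronwall exponent — but these are refinements of the identical argument, not a different approach.
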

\begin{proof}
  Let $x(t)$ and $y(t)$ be two solutions of \cref{eqn:sweeping-process}. Consider a domain $I=[s,t]$ such that both $x(t)$ and $y(t)$ are defined and the derivatives exist. By definition of the normal cone to a convex set, we have
  \begin{equation}
    \langle u-v, x-y\rangle \ge 0, \quad \forall u\in N_C(x), v\in N_C(y).
  \end{equation}
  By definition of a sweeping process, it follows that
  \begin{subequations}
    \begin{equation}
      -\dot{x}(t) - \psi(t,x(t)) \in N_C(x(t)),
    \end{equation}
    \begin{equation}
      -\dot{y}(t) - \psi(t,y(t)) \in N_C(y(t)).
    \end{equation}
  \end{subequations}
  Hence we have
  \begin{equation}
    -\langle \psi(t,x) - \psi(t,y), x-y \rangle \ge \langle \dot{x}(t) - \dot{y}(t), x - y \rangle.
  \end{equation}
  Using the strong monotonicity condition, we obtain
  \begin{equation}
    \langle \dot{x}(t) - \dot{y}(t), x(t) - y(t) \rangle \le -\gamma(t) \|x(t) - y(t)\|^2.
  \end{equation}
  This is equivalent to
  \begin{equation}
    \frac{d}{dt}\|x(t) - y(t)\|^2 \le -\gamma(t) \|x(t) - y(t)\|^2.
  \end{equation}
  Using Gronwall's inequality, we have
  \begin{equation}
    \|x(t) - y(t)\|^2 \le \exp\left( - \int_s^t \gamma(\tau) d\tau \right) \|x(s) - y(s)\|^2
  \end{equation}
  for all $t > s$. Consider two different initial points $x(T_0) = x_0$ and $y(T_0) = y_0$ for some fixed $T_0$ and $x_0,y_0 \in C$. Letting $t \to +\infty$, we obtain
  \begin{equation}
    \lim_{t \to \infty} \|x(t) - y(t)\|^2 = 0,
  \end{equation}
  which completes the proof.
\end{proof}

\begin{remark}
  \cref{lemma:global-stable-sweeping-monotonicity} reveals that the dynamical system associated with the sweeping process has a unique $\omega$-limit set independent of the choice of initial points under certain conditions. Furthermore, if the closed subset $C \subseteq \mathcal{H}$ is bounded, the positive semitrajectory of the associated dynamical system will be precompact, and hence the $\omega$-limit set will be internally chain transitive \cite{2001ChainTransitivity}. 
\end{remark}

\begin{theorem}
  \label{theorem:conv-strong-monotonicity}
  Let $(\mathcal{H}, \mathbb{R}^+, \pi)$ be the dynamical system associated with a unique global solution (under the conditions of \cref{lemma:global-solution-sweeping-process}) to \cref{eqn:sweeping-process}. Assume that $\psi$ is strongly monotone. If $(\mathcal{H}, \mathbb{R}^+, \pi)$ is a gradient-like dynamical system with a Lyapunov function $V: \mathcal{H} \to \mathbb{R}$, the $\omega$-limit set $\Omega(x)$ of any point $x \in C$ for any closed and convex subset $C \subseteq \mathcal{H}$ satisfies
  \begin{equation}
    V(\pi(t,y)) = V(y) , \quad \forall y \in \Omega(x), \quad \forall t \ge 0.
  \end{equation}
\end{theorem}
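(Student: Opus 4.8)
The plan is a LaSalle-type argument. If $\Omega(x) = \emptyset$ the assertion is vacuous, so assume $\Omega(x) \ne \emptyset$; this holds for instance whenever the forward orbit $\{\pi(t,x):t\ge 0\}$ is precompact, in particular when $C$ is bounded (cf.\ the Remark after \cref{lemma:global-stable-sweeping-monotonicity}), and it is the only place where strong monotonicity will actually be used. I would then exploit just two features of the setup: that $V$ is nonincreasing along trajectories, and that $\pi$ is continuous and obeys the semigroup identity.

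First I would record that $t \mapsto V(\pi(t,x))$ is nonincreasing: for $0 \le s \le t$, write $\pi(t,x) = \pi(t-s,\pi(s,x))$ and apply the global-Lyapunov inequality $V(\pi(\tau,z)) \le V(z)$ with $z = \pi(s,x)$ and $\tau = t-s$. Next I would show $\ell := \lim_{t\to\infty}V(\pi(t,x))$ exists and is finite: picking any $y_0\in\Omega(x)$ and $s_n\to\infty$ with $\pi(s_n,x)\to y_0$, continuity of $V$ gives $V(\pi(s_n,x))\to V(y_0)\in\mathbb{R}$, and since a nonincreasing function with a finite subsequential limit converges to it, $\ell = V(y_0)$. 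Running this for an arbitrary $y\in\Omega(x)$ already shows $V\equiv\ell$ on $\Omega(x)$.

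It then remains to propagate this constant value along orbits inside $\Omega(x)$. Fix $y\in\Omega(x)$, a sequence $t_n\to\infty$ with $\pi(t_n,x)\to y$, and $t\ge 0$. Continuity of $\pi$ in the state variable together with the identity $\pi(t,\pi(t_n,x)) = \pi(t+t_n,x)$ gives $\pi(t+t_n,x)\to\pi(t,y)$; since $t+t_n\to\infty$, the previous step and continuity of $V$ yield $V(\pi(t,y)) = \lim_n V(\pi(t+t_n,x)) = \ell = V(y)$, which is exactly the claim.

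I expect the only real obstacle to be ensuring $\ell$ is finite — equivalently, that $V$ is bounded below along the trajectory: monotonicity alone does not preclude $\ell = -\infty$, and the remedy is precisely a limit point of the (precompact) semitrajectory, which is where strong monotonicity or boundedness of $C$ enters. Everything else is bookkeeping with the dynamical-system axioms and uses neither convexity of $C$ nor the regularity of the PSP; in particular one need not separately establish that $\Omega(x)$ is invariant.
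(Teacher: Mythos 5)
Your argument is correct, but it is a genuinely different route from the paper's proof of this theorem. The paper argues through the non-wandering set $\mathcal{J}_x^+$: it first shows that any point contained in its own non-wandering set has $V$ constant along its forward orbit, and then uses strong monotonicity --- via \cref{lemma:global-stable-sweeping-monotonicity} and the remark following it --- to conclude that $\Omega(x)=\Omega(y)=:\Omega_C$ for all $x,y\in C$, so that every $u\in\Omega(x)$ satisfies $u\in\Omega(u)$ and the first step applies. Your LaSalle-type argument (monotone convergence of $t\mapsto V(\pi(t,x))$ to a finite limit $\ell=V(y_0)$, then propagation along $\pi(t+t_n,x)\to\pi(t,y)$ by continuity and the semigroup law) is in fact essentially the proof the paper gives for \cref{theorem:compact-case-sweeping}, the compact-$C$ version; applied here it is more elementary and, as you correctly observe, it never uses strong monotonicity for the stated conclusion --- strong monotonicity does not even guarantee $\Omega(x)\neq\emptyset$; that comes from precompactness of the orbit, and the empty case is vacuous. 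What your route gives up is the extra structural information the paper's route delivers for free: under strong monotonicity the $\omega$-limit set is the \emph{same} for every initial point in $C$ (a single set $\Omega_C$), which is the real content distinguishing \cref{theorem:conv-strong-monotonicity} from \cref{theorem:compact-case-sweeping} and is used implicitly when the paper later treats the limit set as unique. Your closing remark that one need not establish invariance of $\Omega(x)$ separately is also right --- your limit computation $\pi(t+t_n,x)\to\pi(t,y)$ is precisely the proof of that invariance, so you have simply inlined it.
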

\begin{proof}
  Denote the non-wandering set $\mathcal{J}_x^+$ of $x \in \mathcal{H}$ by 
  \begin{equation*}
    \mathcal{J}_x^+ := \{y \in \mathcal{H} | \exists t_n \to \infty, x_n \to x, \text{ s.t. }  \pi(t_n,x_n) \to y\}.
  \end{equation*}
  We first show that if $x$ is contained in its own non-wandering set, i.e., $x \in \mathcal{J}_x^+$, then $V(\pi(t,x)) = V(x)$ for all $t\ge 0$. In fact, since $\mathcal{J}_x^+ \subseteq \mathcal{J}_{\pi(t,x)}^+$ for all $t\ge 0$ by definition, there exists $\tilde{x}_n \to \pi(t,x)$, $t_n \to \infty$, such that $\pi(t_n,\tilde{x}_n) \to x$. Hence it follows that 
  \begin{equation*}
    V(x) = \lim_{n\to \infty} V(\pi(t_n,\tilde{x}_n)) \le \lim_{n\to \infty} V(\tilde{x}_n) = V(\pi(t,x)), 
  \end{equation*}
  for all $t \ge 0$. Since $(\mathcal{H}, \mathbb{R}^+, \pi)$ is a gradient-like dynamical system, for all $t\ge 0$ and $x\in \mathcal{H}$ we have $V(\pi(t,x)) \le V(x)$. Therefore, it can be concluded that $V(\pi(t,x)) = V(x)$. It is also sufficient to observe that $\Omega(x) \subseteq \mathcal{J}_x^+$. We can conclude that if $x \in \Omega(x)$, then $V(\pi(t,x)) = V(x)$ for all $t\ge 0$.

  By \cref{lemma:global-stable-sweeping-monotonicity} and the remark following the lemma, $\Omega(x) = \Omega(y)$ for all $x,y \in C$. Therefore, the $\omega$-limit set can be denoted by $\Omega_C$. Since $C$ is closed in a complete space, it follows that $\Omega(x) \subseteq C$ for all $x \in C$. Hence we have $\Omega(x) = \Omega(v)$ for all $x \in C$ and $v \in \Omega_C$. For any $x \in C$ and any $u \in \Omega(x)$, we have $u \in \Omega(x) = \Omega_C = \Omega(u)$. Therefore, $V(u) = V(\pi(t,u))$ for all $t\ge 0$.
\end{proof}

To look closer at the fixed point set of the Lyapunov function $V$, it is sufficient to take derivatives with respect to time, i.e.,
\begin{equation}
  \frac{dV(\pi(t,x))}{dt} = \left\langle \nabla V(\pi(t,x)), \frac{d\pi(t,x)}{dt} \right\rangle = 0,
\end{equation}
for all $t \ge 0$. Letting $t=0$, we obtain
\begin{equation}
  \langle \nabla V(x), \dot{x}(0) \rangle = 0.
\end{equation}
If we consider $\psi(0,x) = \nabla V(x)$ in \cref{eqn:sweeping-process}, it follows \cite[p. 266, Proposition 2]{1984DifferentialInclusions} that
\begin{equation}
  \|\mathcal{P}_{\mathcal{T}_C(x)} [-\nabla V(x)]\|^2 = 0.
\end{equation}
Hence, $0 \in \nabla V(x) + N_C(x)$. This means that $x$ is a stationary point of the constrained optimization problem $\min_{y\in C} V(y)$.

In fact, the strong monotonicity of the time-dependent vector field $\psi(t,x)$ implicitly indicates some kind of convexity in $x$ of the time-varying vector field. To further investigate the general case where the vector field is non-convex, it is necessary to consider the case where strong monotonicity is not satisfied. In this case, the $\omega$-limit set is not unique compared to the conditions of \cref{lemma:global-stable-sweeping-monotonicity}, while it is still possible to generalize this result.

\begin{theorem}
  \label{theorem:compact-case-sweeping}
  Let $(\mathcal{H}, \mathbb{R}^+, \pi)$ be the dynamical system associated with a unique global solution (under the conditions of \cref{lemma:global-solution-sweeping-process}) to the sweeping process \cref{eqn:sweeping-process}. If $(\mathcal{H}, \mathbb{R}^+, \pi)$ is a gradient-like dynamical system with a Lyapunov function $V: \mathcal{H} \to \mathbb{R}$, the $\omega$-limit set $\Omega(x)$ of any point $x \in C$ for any closed, \textbf{bounded} and convex subset $C \subseteq \mathcal{H}$ satisfies
  \begin{equation}
    V(\pi(t,y)) = V(y) , \quad \forall y \in \Omega(x), \forall t \ge 0.
  \end{equation}
\end{theorem}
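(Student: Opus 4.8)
The plan is to run a LaSalle-type argument adapted to the semiflow $(\mathcal{H},\mathbb{R}^+,\pi)$, so the proof will be shorter and more direct than that of \cref{theorem:conv-strong-monotonicity}, which relied on uniqueness of the $\omega$-limit set. The structural facts I would isolate first are: (i) by \cref{lemma:existence-solution-sweeping} together with the definition of a solution to \cref{eqn:sweeping-process}, every trajectory issued from $x\in C$ satisfies $\pi(t,x)\in C$ for all $t\ge 0$ (the solution is absolutely continuous and lies in $C$ a.e., hence everywhere since $C$ is closed); and (ii) $C$, being closed and bounded in the Hilbert space, is compact. Consequently the positive semitrajectory $\{\pi(t,x):t\ge 0\}$ is relatively compact, so $\Omega(x)$ is nonempty and $\Omega(x)\subseteq C$.

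The first substantive step is to show that $V$ is constant on $\Omega(x)$. Since $(\mathcal{H},\mathbb{R}^+,\pi)$ is gradient-like, $t\mapsto V(\pi(t,x))$ is nonincreasing; as $V$ is continuous on the compact set $C$ it is bounded below there, so $V(\pi(t,x))$ converges to $c:=\inf_{t\ge 0}V(\pi(t,x))$ as $t\to\infty$. If $y\in\Omega(x)$, choose $t_n\to\infty$ with $\pi(t_n,x)\to y$; continuity of $V$ gives $V(y)=\lim_n V(\pi(t_n,x))=c$. Hence $V\equiv c$ on $\Omega(x)$.

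The second step uses forward invariance of $\Omega(x)$. For $y\in\Omega(x)$ and $t\ge 0$, write $y=\lim_n\pi(t_n,x)$ with $t_n\to\infty$; the semigroup property and continuity of $\pi$ yield $\pi(t,y)=\lim_n\pi(t,\pi(t_n,x))=\lim_n\pi(t+t_n,x)$, and since $t+t_n\to\infty$ this limit lies in $\Omega(x)$. Therefore $V(\pi(t,y))=c=V(y)$, which is exactly the claimed identity.

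I expect the only delicate issues to be the preliminary ones rather than the limit arguments: verifying that solutions remain in $C$ so that compactness of $C$ forces relative compactness of semitrajectories and hence nonemptiness and forward invariance of $\Omega(x)$, and relying on the standing hypothesis (via \cref{lemma:existence-solution-sweeping} and \cref{lemma:global-solution-sweeping-process}) that $(\mathcal{H},\mathbb{R}^+,\pi)$ genuinely satisfies the continuity and semigroup axioms of a dynamical system, so that limits may be passed through $\pi$. The conceptual point is that, in contrast with \cref{theorem:conv-strong-monotonicity}, we cannot assert $\Omega(x)=\Omega(u)$, but constancy of $V$ on $\Omega(x)$ together with forward invariance of $\Omega(x)$ suffices and requires no such uniqueness. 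If desired, one could instead mimic the non-wandering-set argument of \cref{theorem:conv-strong-monotonicity} using $\Omega(x)\subseteq\mathcal{J}_x^+$, but the route above is cleaner.
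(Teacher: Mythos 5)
Your proof is correct and follows essentially the same route as the paper's: $t\mapsto V(\pi(t,x))$ is nonincreasing and bounded along the precompact positive semitrajectory, hence converges to a limit $c$ which by continuity equals $V$ on all of $\Omega(x)$, and forward invariance of $\Omega(x)$ (which you prove explicitly, whereas the paper simply invokes it) then gives $V(\pi(t,y))=c=V(y)$. The one point worth flagging — shared with the paper's own proof, which also asserts ``$C$ is compact'' — is that closed and bounded implies compact only in finite dimension (in a general Hilbert space one would need a weak-compactness or finite-dimensionality argument), but since the applications take $\mathcal{H}=\mathbb{R}^m$ this is harmless.
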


\begin{proof}
  For an arbitrary point $x \in C$, we can define a continuous function $\phi_x: \mathbb{R} \to \mathbb{R}, t \mapsto V(\pi(t,x))$. Clearly, we have $\phi_x(s) \le \phi_x(t)$ for all $s \ge t$. Since $C$ is compact, the positive semitrajectory of $\pi(t,x)$ is precompact and hence $V(\pi(t,x))$ is bounded. Hence $\phi_x(t)$ is a continuous bounded monotonically decreasing function of $t$. Therefore, there exists $\sigma_x \in \mathbb{R}$ such that $\lim_{t\to \infty} \phi_x(t) = \sigma_x$. Now consider $y \in \Omega(x)$. Then by definition, there exists $\tilde{t}_n \to \infty$ such that $\pi(\tilde{t}_n,x)\to y$. Consequently, $V(y) = \lim_{n\to \infty} V(\pi(\tilde{t}_n,x)) = \sigma_x$. This indicates that $\forall y \in \Omega(x)$, we have $V(y) = \sigma_x$. Since the $\omega$-limit set is invariant, we have $\pi(t,y) \in \Omega(x)$ for all $t \ge 0$. It then follows that $V(\pi(t,y)) = \sigma_x = V(y)$ for all $t\ge 0$.
\end{proof}

\cref{theorem:compact-case-sweeping} establishes a useful theoretical result on constrained continuous-time dynamical systems. Nevertheless, a discrete iteration is not guaranteed to remain stable under a general discretization scheme. Therefore, it is necessary to apply integrators which preserve certain structures of the continuous-time dynamics (especially the asymptotic behavior), as will be discussed in the next subsection. 

\subsection{Explicit Euler Scheme with Decaying Step Sizes}
Without loss of generality, we assume the conditions of \cref{lemma:global-solution-sweeping-process} are satisfied by the PSP \cref{eqn:sweeping-process}. Therefore, a unique solution is defined for the entire nonnegative real line $\mathbb{R}^+$ given any initial point.

To discretize the continuous-time process, we apply a time-decaying positive step size $h_k > 0$ ($\forall k \in \mathbb{N}^+$) which satisfies
\begin{equation}
  \label{eqn:step-size-requirement}
  h_0=0, \quad \lim_{k\to\infty} h_k=0, \quad \sum_{k=1}^\infty h_k = \infty.
\end{equation}
Correspondingly, the numerical scheme is given by
\begin{equation}
  \label{eqn:projected-numerical}
  \bar{z}_{k} = P_{\mathcal{K}} [\bar{z}_{k-1} - h_k \psi(t_{k-1}, \bar{z}_{k-1})], \quad \forall k \in \mathbb{N},
\end{equation}
where $\bar{z}_0 = z(0) = z_0 \in \mathcal{K}$ and $t_k=\sum_{\ell=0}^{k} h_\ell$. Recall that $x-\bar{x} \in N_{\mathcal{K}}(\bar{x})$ for all $x \in \mathcal{H}$ and $\bar{x} = P_{\mathcal{K}}(x)$. Hence the numerical scheme \cref{eqn:projected-numerical} can be viewed as
\begin{equation}
  \label{eqn:projected-numerical2}
  - \frac{\bar{z}_{k}-\bar{z}_{k-1}}{h_k} \in \psi(t_{k-1}, \bar{z}_{k-1}) + N_{\mathcal{K}}(\bar{z}_{k}), \quad \forall k \in \mathbb{N},
\end{equation}
which is a discrete explicit Euler scheme of the continuous-time dynamics \cref{eqn:sweeping-process} with step size $h_k > 0$ for all $k$. Furthermore, it is sufficient to consider a linear interplation process $u(t)$ for estimation, i.e., for all $k \in \mathbb{N}^+$
\begin{equation}
  u(t) = \bar{z}_{k-1} + \frac{\bar{z}_{k}-\bar{z}_{k-1}}{h_k} (t-t_{k-1}), \quad \forall t_{k-1}\le t < t_{k}.
\end{equation}
Let $z^s(t)$ represent the unique solution to the PSP \cref{eqn:sweeping-process} starting at $s$, i.e., for $z^s(s) = u(s)$
\begin{equation}
  -\dot{z}^s(t) \in \psi(t, z^s(t)) + N_{\mathcal{K}}(z^s(t)), \quad t \ge s.
\end{equation}
Likewise, denote by $z_s(t)$ the unique solution to the PSP \cref{eqn:sweeping-process} ending at $s$, i.e., for $z_s(s) = u(s)$
\begin{equation}
  -\dot{z}_s(t) \in \psi(t, z_s(t)) + N_{\mathcal{K}}(z_s(t)), \quad t \le s.
\end{equation}
To derive the convergence results, the following common assumption is introduced:
\begin{assumption}
  \label{assumption:conv-for-iterates}
  The following conditions hold:
  \begin{itemize}
    \item The sequence $\{z_n\}$ is bounded;
    \item The function $\psi$ is sweeping-regular (cf. \cref{condition:sweeping-regular}) on $([0,t],\mathcal{K})$, $\forall t \ge 0$; 
    \item The step size $\{h_k\}$ satisfies \cref{eqn:step-size-requirement} and $\sum_{k=1}^\infty h_k^3 < \infty$;
    \item $\psi$ satisfies the weak monotonicity condition:
    \begin{equation}
      \label{eqn:weak-monotonicity}
      \langle \psi(t,x) - \psi(t,y), x-y \rangle \ge \gamma(t) \|x-y\|^2,
    \end{equation}
    for all $t \in \mathbb{R}$, $x,y \in \mathcal{H}$ and an integrable function $\gamma: \mathbb{R} \to \mathbb{R}$ satisfying for all $T>0$
    \begin{equation}
      \inf_{\{t,s\in \mathbb{R}:0 \le t-s\le T\}} \int_{s}^{t} \gamma(\tau) d\tau > -\infty;
    \end{equation}
    \item Bounded variations: ($\forall M >0, \forall k \in \mathbb{N}^+$)
    \begin{equation}
      \sup_{\|z\|\le M} \|\psi(t_k,z) - \psi(t_{k-1},z)\| \le S_k = S_k(M),
    \end{equation}
    for $\{S_k,h_k\}$ satisfying $\sum_{k=0}^{\infty} S_k h_k (S_k + h_k) < \infty$.
  \end{itemize}
\end{assumption}
We then have the following lemma:
\begin{lemma}
  \label{lemma:convergence-limiting-behavior}
  Let \cref{assumption:conv-for-iterates} hold. For all $\tau >0$,
  \begin{subequations}
    \begin{equation}
      \lim_{s \to \infty} \sup_{s \le t \le s+\tau} \|u(t) - z^s(t)\| = 0,
    \end{equation}
    \begin{equation}
      \lim_{s \to \infty} \sup_{s-\tau \le t \le s} \|u(t) - z_s(t)\| = 0.
    \end{equation}
  \end{subequations}
\end{lemma}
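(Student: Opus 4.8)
\textit{Proof plan.} The plan is to prove both assertions as finite-horizon comparison estimates of ``asymptotic pseudotrajectory'' type. Fix $\tau>0$. For the forward claim I compare, on the window $[s,s+\tau]$, the interpolated polygon $u$ with the exact solution $z^s$ of the sweeping process \cref{eqn:sweeping-process} started at $u(s)$, and bound $\|u-z^s\|$ on that window by a quantity which is a tail of convergent series; since $\sum_k h_k=\infty$ forces $t_k\to\infty$, that tail starts at an index $m=m(s)\to\infty$ as $s\to\infty$, so the bound vanishes. Throughout, \cref{assumption:conv-for-iterates} is used to confine all trajectories to one fixed ball: boundedness of $\{\bar z_k\}$ together with convexity of $\mathcal K$ keeps $u(t)$ in some $\mathbb B(0,\eta)$, and then \cref{lemma:existence-solution-sweeping} keeps $z^s$ (and, for the backward claim, $z_s$) in the same ball on each window. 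This lets us use the Lipschitz function $L_\eta$ and growth function $\beta$ of \cref{condition:sweeping-regular} with a single $\eta$, and yields $\|\dot z^s(t)\|\le C\beta(t)$ and, by nonexpansiveness of $P_{\mathcal K}$, $\|\bar z_k-\bar z_{k-1}\|\le h_k\|\psi(t_{k-1},\bar z_{k-1})\|\le C h_k\beta(t_{k-1})$.

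Set $e(t):=u(t)-z^s(t)$, so $e(s)=0$. On the interior of a grid cell $(t_{k-1},t_k)$ meeting $[s,s+\tau]$, \cref{eqn:projected-numerical2} gives $\dot u(t)=-\psi(t_{k-1},\bar z_{k-1})-\nu_k$ with $\nu_k\in N_{\mathcal K}(\bar z_k)$, and \cref{eqn:sweeping-process} gives $\dot z^s(t)=-\psi(t,z^s(t))-\mu(t)$ with $\mu(t)\in N_{\mathcal K}(z^s(t))$, whence $\frac12\frac{d}{dt}\|e(t)\|^2=\langle\psi(t,z^s(t))-\psi(t_{k-1},\bar z_{k-1}),\,u(t)-z^s(t)\rangle+\langle\mu(t)-\nu_k,\,u(t)-z^s(t)\rangle$. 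I then make two reductions. For the normal-cone term, split $u(t)-z^s(t)=(\bar z_k-z^s(t))+(u(t)-\bar z_k)$; since $\bar z_k,z^s(t)\in\mathcal K$, monotonicity of $N_{\mathcal K}$ gives $\langle\mu(t)-\nu_k,\bar z_k-z^s(t)\rangle\le0$, while $\|u(t)-\bar z_k\|\le\|\bar z_k-\bar z_{k-1}\|=O(h_k\beta(t_{k-1}))$ and $\|\mu(t)-\nu_k\|=O(\beta(t)+\beta(t_{k-1}))$ bound the remainder. For the $\psi$ term, write $\psi(t,z^s(t))-\psi(t_{k-1},\bar z_{k-1})$ as $[\psi(t,z^s(t))-\psi(t,u(t))]+[\psi(t,u(t))-\psi(t,\bar z_{k-1})]+[\psi(t,\bar z_{k-1})-\psi(t_{k-1},\bar z_{k-1})]$: pairing the first bracket with $e(t)$ and invoking weak monotonicity \cref{eqn:weak-monotonicity} gives $\le-\gamma(t)\|e(t)\|^2$; the second is $\le L_\eta(t)\|u(t)-\bar z_{k-1}\|\,\|e(t)\|=O(L_\eta(t)h_k\beta(t_{k-1}))\|e(t)\|$; the third is $\le S_k\|e(t)\|$ by the bounded-variation bound.

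Collecting terms and applying Young's inequality to the contributions linear in $\|e(t)\|$ — absorbing the resulting multiples of $\|e(t)\|^2$ into the $\gamma$-slot, which over a window of length $\tau$ only shifts the Gronwall exponent by a bounded constant — I obtain $\frac12\frac{d}{dt}\|e(t)\|^2\le-(\gamma(t)-\delta)\|e(t)\|^2+\rho_k(t)$ for any fixed $\delta>0$, where $\int_{s}^{s+\tau}\rho$ is, up to constants, a tail of the series $\sum_k(h_k^3+S_k^2h_k)$ plus a finite-window sum $\sum_{t_k\in[s,s+\tau]}h_k^2$. The first vanishes by the step-size and bounded-variation conditions of \cref{assumption:conv-for-iterates} (the $S_k$-terms being handled via Cauchy--Schwarz, $\sum_{t_k\in[s,s+\tau]}S_kh_k\le(\sum_{t_k\in[s,s+\tau]}S_k^2h_k)^{1/2}(\sum_{t_k\in[s,s+\tau]}h_k)^{1/2}$), and the second vanishes since $\sum_{t_k\in[s,s+\tau]}h_k^2\le(\sup_{t_k\ge s}h_k)\sum_{t_k\in[s,s+\tau]}h_k=O(\tau\sup_{t_k\ge s}h_k)$ with $h_k\to0$. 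The differential form of Gronwall's inequality, with $e(s)=0$ and $\inf_{0\le t-\sigma\le\tau}\int_\sigma^t\gamma>-\infty$ from \cref{assumption:conv-for-iterates}, then gives $\sup_{s\le t\le s+\tau}\|e(t)\|^2\to0$ as $s\to\infty$, which is the first identity. The second identity is obtained the same way on $[s-\tau,s]$ with $e(t):=u(t)-z_s(t)$ and $e(s)=0$, except that weak monotonicity no longer controls the reverse time direction; one instead uses the Lipschitz bound to produce the two-sided estimate $\big|\frac12\frac{d}{dt}\|e(t)\|^2\big|\le L_\eta(t)\|e(t)\|^2+\rho_k(t)$ and applies Gronwall's inequality backward from $t=s$.

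The step I expect to be the main obstacle is precisely this error bookkeeping: every contribution that is only linear in $\|e(t)\|$ must be split by Young's inequality in a proportion leaving a ``coefficient-squared'' piece that is globally summable by \cref{assumption:conv-for-iterates} (e.g.\ $S_k\|e\|\le\frac12 S_k^2+\frac12\|e\|^2$, using $\sum_k S_k^2h_k<\infty$), while the $\|e\|^2$ piece is harmlessly absorbed by the Gronwall exponent on a finite window; and the nonsmooth normal-cone term must be dispatched purely through the monotonicity inequality for $N_{\mathcal K}$, since no regularity of $z^s$ is available there. A subsidiary point is the uniform-in-$s$ confinement of $u$, $z^s$, $z_s$ to a single ball, which relies on the assumed boundedness of $\{z_n\}$ together with the a priori bounds of \cref{lemma:existence-solution-sweeping}.
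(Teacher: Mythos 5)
Your argument is correct and follows essentially the same route as the paper's proof: the same differential inequality for $\|u(t)-z^s(t)\|^2$ obtained from normal-cone monotonicity, the same three-way splitting of the $\psi$-difference into a weak-monotonicity term, a Lipschitz term of order $h_k$, and a bounded-variation term $S_k$, followed by Young's inequality, Gronwall's inequality on a window of length $\tau$, and the observation that the resulting bound is a tail of the convergent series $\sum_k (S_k^2 h_k + h_k^3)$. Two of your refinements actually tighten the paper's write-up: you get the $O(h_k)$ bound on $\|\bar z_k-\bar z_{k-1}\|$ directly from nonexpansiveness of $P_{\mathcal K}$ rather than via the paper's discrete-Gronwall recursion for $v_k$, and you explicitly control the normal-cone cross term $\langle \mu(t)-\nu_k,\,u(t)-\bar z_k\rangle$ (contributing the vanishing sum $\sum_{t_k\in[s,s+\tau]}h_k^2$), which the paper's displayed inequality silently discards.
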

\begin{proof}
  It is sufficient to prove the claim for $z^s(t)$ as arguments for the other claim are completely analogous. Furthermore, if the following alternative claim:
  \begin{equation}
    \lim_{\ell \to \infty} \sup_{t_\ell \le t \le t_\ell+\tau} \|u(t) - z^{t_\ell}(t)\| = 0, \quad \forall \tau > 0,
  \end{equation}
  holds, the other direction holds by analogy. Then, for all $s>0$, there exists some sufficiently large $\ell > 0$ such that $t_\ell \le s < s+\tau \le t_\ell + T_s$ for some $T_s > 0$ and 
  \begin{equation}
    \sup_{s \le t \le s+\tau} \|u(t) - z^s(t)\| \le \sup_{t_\ell \le t \le t_\ell + T_s} \|u(t) - z^{t_\ell}(t)\|. 
  \end{equation}
  The desired result will be obtained by taking limit.

  To begin with, we first show that $\dot{u}(t)$ is bounded in $[t_k, t_k+\tau]$ for all $\tau > 0$ and $k \in \mathbb{N}$. Without loss of generality, we assume that $N = N(\tau) =\sup \{m:t_m \le \tau\} \ge k+1$. Using the numerical scheme \cref{eqn:projected-numerical2}, we obtain
  \begin{equation}
    - v_k - \psi(t_k, \bar{z}_k) \in N_{\mathcal{K}}(\bar{z}_{k+1}), \quad \forall k \in \mathbb{N},
  \end{equation}
  where $v_k = (\bar{z}_{k+1}-\bar{z}_k)/h_{k+1}$. Applying the geometric characteristics of normal cones and making difference between $v_\ell$ and $v_{\ell-1}$, we find that
  \begin{equation}
    \langle v_\ell - v_{\ell - 1}, v_\ell \rangle \le - \langle \psi(t_\ell, \bar{z}_\ell) - \psi(t_{\ell-1}, \bar{z}_{\ell-1}), v_\ell \rangle.
  \end{equation}
  By \cref{assumption:conv-for-iterates}, it follows that
  \begin{equation}
    \|\psi(t_\ell, \bar{z}_\ell) - \psi(t_{\ell-1}, \bar{z}_\ell)\| \le S_\ell,
  \end{equation}
  for some $S_\ell>0$, and 
  \begin{equation}
    \|\psi(t_{\ell-1}, \bar{z}_\ell) - \psi(t_{\ell-1}, \bar{z}_{\ell-1})\| \le h_\ell L_\eta(t_{\ell-1}) \|v_{\ell-1}\|.
  \end{equation}
  Using the arithmetic mean inequality, i.e.,
  \begin{equation}
    ab \le \frac{1}{2}(b^2 c + a^2/c), \quad \forall a,b\in \mathbb{R}, c > 0,
  \end{equation}
  and taking some $0<\varepsilon<1$, we conclude that 
  \begin{equation}
    (1-\varepsilon) \|v_\ell\|^2 \le \frac{1+(L_\eta(t_{\ell-1}))^2 h_\ell^2}{2\varepsilon} \|v_{\ell-1}\|^2 + \frac{S_{\ell}^2}{\varepsilon}.
  \end{equation}
  Letting $\varepsilon = 1/2$, we have for all $\ell \ge 0$
  \begin{equation}
    \|v_\ell\|^2 \le 2[1+(L_\eta(t_{\ell-1}))^2 \bar{h}^2] \|v_{\ell-1}\|^2 + 2 S_{\ell}^2,
  \end{equation}
  where $\bar{h} = \sup_{k\in \mathbb{N}} h_k$ is an upper bound of $\{h_k\}$. By the discrete Gronwall inequality, it follows that for all $\ell > k$
  \begin{equation*}
    \|v_\ell\|^2 \le (2+2L^2\bar{h}^2)^{\ell-k} \|v_k\|^2 + 2S_{\ell}^2 \sum_{m=k}^\ell (2+2L^2\bar{h}^2)^{\ell-m},
  \end{equation*}
  for some $L>0$ due to the integrability of $L_\eta(t)$. Therefore, we conclude that for any fixed $\tau > 0$, $v_\ell$ is bounded for all $\ell \le N$ and $\dot{u}(t)$ is bounded as a direct result. 

  Next we estimate $\|u(t) - z^{t_\ell}(t)\|$. Let $t_\ell \le t < t_{\ell+1}$. It is clear that we have the following truncated dynamics:
  \begin{subequations}
    \begin{equation}
      -\dot{u}(t)-\psi(t_\ell, \bar{z}_\ell) \in N_{\mathcal{K}}(\bar{z}_{\ell+1}),
    \end{equation}
    \begin{equation}
      -\dot{z}^{t_\ell}(t)-\psi(t,z^{t_\ell}(t)) \in N_{\mathcal{K}}(z^{t_\ell}(t)).
    \end{equation}
  \end{subequations}
  Applying the geometric characteristics of normal cones, it is straightforward to conclude that
  \begin{equation}
    \label{eqn:temp-res}
    \frac{1}{2} \frac{d}{dt}\|u(t) - z^{t_\ell}(t)\|^2 \! \le \! - \langle \psi(t_\ell, \bar{z}_{\ell})-\psi(t,z^{t_\ell}(t)), u(t) - z^{t_\ell}(t)\rangle.
  \end{equation}
  It follows from the boundedness of $\dot{u}(t)$ that
  \begin{equation*}
    \|u(t) - \bar{z}_\ell\| = \left\|\frac{\bar{z}_{\ell+1}-\bar{z}_{\ell}}{h_{\ell+1}} (t-t_\ell) \right\| \le \|\dot{u}(t)\|h_{\ell+1} \le M h_{\ell+1}.
  \end{equation*}
  Therefore, we have for some $K>0$ by \cref{assumption:conv-for-iterates}
  \begin{equation*}
    \begin{aligned}
      &\|\psi(t,u(t)) - \psi(t_\ell, \bar{z}_\ell)\| \\
      &\le \|\psi(t,u(t)) - \psi(t_\ell,u(t)) \| + \| \psi(t_\ell,u(t)) - \psi(t_\ell, \bar{z}_\ell)\| \\ 
      &= S_\ell + K h_{\ell+1}.
    \end{aligned}
  \end{equation*}
  Recall that $\psi$ satisfies the weak monotonicity condition \cref{eqn:weak-monotonicity}. Estimating the right-hand side of \cref{eqn:temp-res}, we conclude that
  \begin{equation}
    \begin{aligned}
      &- \langle \psi(t_\ell, \bar{z}_{\ell})-\psi(t,z^{t_\ell}(t)), u(t) - z^{t_\ell}(t)\rangle \\ &\le \frac{(S_\ell+K h_{\ell+1})^2}{2} - \frac{1}{2}(\gamma(t)-1) \|u(t) - z^{t_\ell}(t)\|^2,
    \end{aligned}
  \end{equation}
  Then \cref{eqn:temp-res} can be written as
  \begin{equation}
    \begin{aligned}
      &\frac{d}{dt}\|u(t) - z^{t_\ell}(t)\|^2 \\
      &\le -(\gamma(t)-1) \|u(t) - z^{t_\ell}(t)\|^2 + (S_\ell+K h_{\ell+1})^2.
    \end{aligned}
  \end{equation}
  Now consider $t_\ell \le t \le t_\ell + \tau$. Using Gronwall's inequality and $u(t_{\ell}) = z^{t_\ell}(t_\ell)$, we obtain
  \begin{equation*}
    \|u(t) - z^{t_\ell}(t)\|^2 \! \le \! \sum_{k=\ell}^{N(\tau)-1} \! (S_k+K h_{k+1})^2 \! \int_{t_k}^{t_{k+1}} \! A(s,t) ds,
  \end{equation*}
  where $A(s,t) = e^{\int_s^t -(\gamma(\tau)-1) d\tau}$. By the weak monotonicity condition, we have for all $t_\ell \le s < t \le t_\ell + \tau$
  \begin{equation}
    \sup_{s,t} A(s,t) = \sup_{s,t} e^\tau e^{\int_s^t -\gamma(x) dx} < \infty.
  \end{equation}
  Therefore, it follows that for some $M>0$
  \begin{equation}
    \|u(t) - z^{t_\ell}(t)\|^2 \le M \sum_{k=\ell}^{N(\tau)-1} \! (S_k+K h_{k+1})^2 h_{k+1}.
  \end{equation}
  By bounded variations in \cref{assumption:conv-for-iterates}, it follows that
  \begin{equation}
    \lim_{\ell\to\infty} \sup_{t_\ell \le t \le t_\ell + \tau} \|u(t) - z^{t_\ell}(t)\|^2 = 0,
  \end{equation}
  which completes the proof.
\end{proof}



  

Based on this lemma, via a straightforward application of \cite[p. 17, Theorem 2.1]{2023StochasticApproxDynamics}, we obtain the desired convergence result as follows:
\begin{theorem}
  \label{theorem:conv-numerical}
  Under \cref{assumption:conv-for-iterates}, the sequence $\{\bar{z}_n\}$ generated by \cref{eqn:projected-numerical} converges to a connected internally chain transitive invariant set of \cref{eqn:sweeping-process}.
\end{theorem}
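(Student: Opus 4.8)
The plan is to recognize \cref{eqn:projected-numerical} as a stochastic-approximation-type recursion whose linear interpolation $u(t)$ is an \emph{asymptotic pseudo-trajectory} of the semiflow $\pi$ generated by the PSP \cref{eqn:sweeping-process}, and then to quote the general limit-set theorem for asymptotic pseudo-trajectories, namely \cite[p. 17, Theorem 2.1]{2023StochasticApproxDynamics}. Concretely, I would proceed in four steps.

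First, I would establish precompactness: since $\{z_n\}$ (equivalently $\{\bar z_n\}$) is bounded by \cref{assumption:conv-for-iterates} and, on each interval $[t_{k-1},t_k]$, $u(t)$ lies on the segment joining $\bar z_{k-1}$ and $\bar z_k$, the whole trajectory $u(\mathbb{R}^+)$ is bounded and contained in the closed set $\mathcal{K}$; hence $\overline{u(\mathbb{R}^+)}$ is compact and the limit set $L(u):=\bigcap_{t\ge 0}\overline{\{u(s):s\ge t\}}$ is nonempty, compact and connected. Second, I would verify the asymptotic-pseudo-trajectory property, which is exactly the content of \cref{lemma:convergence-limiting-behavior}: reparametrizing, for every $T>0$ and every $0\le r\le T$ one has $\|u(s+r)-z^s(s+r)\| \le \sup_{s\le t\le s+T}\|u(t)-z^s(t)\|$, and the right-hand side tends to $0$ as $s\to\infty$, where $z^s(\cdot)$ is the unique PSP solution issued from $u(s)$ at time $s$. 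Thus $u$ shadows the forward flow of the PSP uniformly on compact time windows; the second identity in \cref{lemma:convergence-limiting-behavior} supplies the analogous backward-shadowing statement, which is what upgrades $L(u)$ from positively invariant to invariant.

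Third, I would invoke \cite[p. 17, Theorem 2.1]{2023StochasticApproxDynamics}: a bounded asymptotic pseudo-trajectory of the semiflow has a limit set that is a connected, compact, invariant and internally chain transitive subset of the phase space; applied to $u$ and $\pi$ this yields that $L(u)$ is a connected internally chain transitive invariant set of \cref{eqn:sweeping-process}. Finally, since $\bar z_n = u(t_n)$ with $t_n\to\infty$ and, for any precompact trajectory, $d(u(t);L(u))\to 0$ as $t\to\infty$, it follows that $d(\bar z_n;L(u))\to 0$, i.e.\ $\{\bar z_n\}$ converges to $L(u)$.

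The main obstacle is not any single estimate — those are already packaged in \cref{lemma:convergence-limiting-behavior} — but rather making the passage to \cite[Theorem 2.1]{2023StochasticApproxDynamics} genuinely rigorous in the \emph{non-autonomous} setting: the map $z^s(\cdot)$ depends on the starting time $s$ and not merely on the elapsed time, so $\pi$ is a priori not a classical semiflow. One must either argue that the time-inhomogeneity washes out asymptotically (the bounded-variation condition $\sum_k S_k h_k (S_k+h_k)<\infty$ is precisely designed so that the late-time dynamics look autonomous), or pass to the extended autonomous system on $\mathbb{R}\times\mathcal{K}$ and check that the cited theorem applies there; either route requires care that "invariant set of \cref{eqn:sweeping-process}" is interpreted consistently. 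A secondary point worth checking is that $L(u)\subseteq\mathcal{K}$ and that the restriction of the flow to $\mathcal{K}$ is the relevant object, which follows from closedness and convexity of $\mathcal{K}$ together with \cref{lemma:global-solution-sweeping-process}.
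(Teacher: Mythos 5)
Your proposal is correct and follows essentially the same route as the paper: the paper's argument is precisely to read \cref{lemma:convergence-limiting-behavior} as establishing that the interpolation $u(t)$ is an asymptotic pseudo-trajectory of the PSP semiflow and then to apply \cite[p.~17, Theorem 2.1]{2023StochasticApproxDynamics} directly. Your additional remarks on precompactness, the role of the backward-shadowing estimate, and the care needed for the non-autonomous setting are sensible elaborations of steps the paper leaves implicit.
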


In general, \cref{theorem:conv-numerical} is the best result one can obtain on convergence of the numerical scheme \cref{eqn:projected-numerical2} corresponding to a PSP. Unfortunately, the result presented in \cref{theorem:compact-case-sweeping} for a continuous-time sweeping process cannot be simply extended to the numerical case. The primary obstacle lies in the unboundedness of the trajectory as discussed in \cite{1996DynSystApprochStocApprox}. Besides, some alternatives for the assumption that $\{\bar{z}_k\}$ is bounded are provided in \cite[Chap. 4]{2023StochasticApproxDynamics}. Furthermore, the following corollary is immediate.

\begin{corollary}
  If the only internally chain transitive invariant sets for \cref{eqn:sweeping-process} are isolated critical points, then $\{z_n\}$ converges to a critical point under \cref{assumption:conv-for-iterates}.
\end{corollary}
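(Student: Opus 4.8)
The plan is to deduce the statement directly from \cref{theorem:conv-numerical} by combining it with the topological hypothesis on internally chain transitive invariant sets. First I would apply \cref{theorem:conv-numerical}: under \cref{assumption:conv-for-iterates}, the sequence generated by the Euler scheme \cref{eqn:projected-numerical} (the sequence $\{z_n\}$ of \cref{eqn:compressed-form} after absorbing the residual $r_n+\xi_n$ into the perturbation) converges to some \emph{connected} internally chain transitive invariant set $\Lambda$ of the sweeping process \cref{eqn:sweeping-process}, i.e., $d(z_n;\Lambda)\to 0$ as $n\to\infty$.

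Next I would exploit the hypothesis that every internally chain transitive invariant set of \cref{eqn:sweeping-process} is an isolated critical point. Since $\Lambda$ is such a set, it is a nonempty subset of the critical point set, which under the hypothesis is discrete (its members are isolated); hence $\Lambda$ is totally disconnected. A set that is simultaneously connected and totally disconnected is a singleton, so $\Lambda=\{x^\star\}$ for some isolated critical point $x^\star$ of \cref{eqn:sweeping-process}. The \emph{connectedness} clause in the conclusion of \cref{theorem:conv-numerical} is precisely what is being used here, and this elementary observation is the only step that is not a verbatim restatement of \cref{theorem:conv-numerical}; it is the one place I would make sure the argument is airtight.

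Finally, $d(z_n;\Lambda)\to 0$ now reads $\|z_n-x^\star\|\to 0$, which is exactly the assertion that $\{z_n\}$ converges to a critical point, completing the proof. No further obstacle arises: boundedness of $\{z_n\}$, sweeping-regularity of $\psi$, the step-size conditions, weak monotonicity, and bounded variation are all already provided by \cref{assumption:conv-for-iterates}, so \cref{theorem:conv-numerical} applies without additional hypotheses, and the corollary follows.
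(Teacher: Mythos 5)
Your proposal is correct and is essentially the argument the paper intends: the corollary is stated as an immediate consequence of \cref{theorem:conv-numerical}, and your use of the connectedness of the limiting internally chain transitive invariant set to force it to be a single isolated critical point is exactly the (implicit) step that makes the deduction work.
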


In the previous subsection, we have characterized the $\omega$-limit set of the continuous-time sweeping process on a compact and convex subset given the existence of a Lyapunov function. The question is whether this result can be extended to the numerical case. Such extensions are never straightforward since the $\omega$-limit set of $u(t)$ only coincides with an internally chain transitive set of \cref{eqn:sweeping-process} as presented in \cref{theorem:conv-numerical}. Although the $\omega$-limit set of any precompact positive orbit with respect to a continuous semiflow is internally chain transitive \cite[Lemma 2.1']{2001ChainTransitivity}, the opposite is not true in general. Fortunately, by introducing the concept of Lyapunov functions for a PSP, we can obtain a similar conclusion to that of the continuous-time case.
\begin{corollary}
  \label{corollary:projected-numerical-Lyapunov-stability}
  Let $\mathcal{L} \subset \mathbb{R}^m$ be a nonempty compact set, $U \subset \mathcal{K} \subset \mathbb{R}^m$ be a bounded open neighborhood of $\mathcal{L}$, and $V:\mathcal{K} \to \mathbb{R}^+$ be continuously differentiable. Let the following hold:
  \begin{itemize}
    \item $u(t) \in U$ for all $t\ge 0$;
    \item $V^{-1}(0) = \mathcal{L}$;
    \item The Lie derivative $\frac{dV}{dt} \le 0$ along \cref{eqn:sweeping-process} holds for all $t\ge 0$ and $x \in \mathcal{K}$ with equality if and only if $x \in \mathcal{L}$.
  \end{itemize}
  Then $\{z_n\}$ converges to an internally chain transitive set contained in $\mathcal{L}$ under \cref{assumption:conv-for-iterates}. 
\end{corollary}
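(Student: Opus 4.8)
The plan is to reduce the claim to showing that the limit set produced by \cref{theorem:conv-numerical} lies inside $\mathcal{L}$, and then to obtain this inclusion via a LaSalle-type invariance argument sharpened by internal chain transitivity. First I would invoke \cref{theorem:conv-numerical}: under \cref{assumption:conv-for-iterates} the iterates $\{z_n\}$ (and the interpolation $u(t)$) converge to a connected, internally chain transitive, $\pi$-invariant set $\Omega$ of \cref{eqn:sweeping-process}, where $\pi$ is the semiflow of the PSP, defined globally in time by \cref{lemma:global-solution-sweeping-process}. Since $u(t)\in U$ for all $t$, we get $\Omega\subseteq\overline{U}$, which is a compact subset of $\mathcal{K}$ because $\mathcal{K}$ is closed; consequently $V$ (being $C^1$ on $\mathcal{K}$) is uniformly continuous, with some modulus $\rho$, on the compact set $\mathbb{B}(\Omega,\varepsilon_0)\cap\mathcal{K}$ for a fixed small $\varepsilon_0>0$. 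It then suffices to prove $\Omega\subseteq\mathcal{L}$.

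Next I would analyze forward orbits inside $\Omega$. Fix $y\in\Omega$; then $\omega_\pi(y)$, the $\omega$-limit set of $y$ under $\pi$, is a nonempty $\pi$-invariant subset of $\Omega$. Since PSP solutions are absolutely continuous and $V\in C^1$, $t\mapsto V(\pi(t,y))$ is absolutely continuous with a.e.\ derivative equal to the Lie derivative evaluated at $\pi(t,y)\in\mathcal{K}$, hence $\le 0$; being bounded below by $0$ it decreases to some $\ell_y\ge 0$, so $V\equiv\ell_y$ on $\omega_\pi(y)$. Invariance of $\omega_\pi(y)$ then makes $V$ constant along every trajectory it contains, so the Lie derivative vanishes identically along such trajectories; the ``equality iff $x\in\mathcal{L}$'' clause gives $\omega_\pi(y)\subseteq\mathcal{L}=V^{-1}(0)$, whence $\ell_y=0$. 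Thus $V(\pi(t,y))\downarrow 0$ for \emph{every} $y\in\Omega$. Applying Dini's theorem to the pointwise-decreasing sequence $x\mapsto V(\pi(n,x))$ of continuous functions on the compact set $\Omega$, with continuous limit $0$, yields uniform convergence; since $V$ is nonincreasing along $\pi$, the quantity $g(t):=\sup_{x\in\Omega}V(\pi(t,x))$ is nonincreasing with $g(t)\to0$.

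The decisive step ``closes the loop'' using internal chain transitivity. Fix $y\in\Omega$ and $\delta>0$; choose $T$ with $g(T)<\delta$ and pick $0<\varepsilon<\varepsilon_0$. Internal chain transitivity of $\Omega$ provides an $(\varepsilon,T,\pi)$-chain $y=x_0,x_1,\dots,x_n=y$ inside $\Omega$ with $t_i\ge T$ and $d(\pi(t_i,x_i),x_{i+1})<\varepsilon$. For each $i$, $V(\pi(t_i,x_i))\le g(t_i)\le g(T)<\delta$ (using $x_i\in\Omega$ and monotonicity of $g$), while $|V(x_{i+1})-V(\pi(t_i,x_i))|\le\rho(\varepsilon)$ since both points lie in $\mathbb{B}(\Omega,\varepsilon_0)\cap\mathcal{K}$; in particular $V(y)=V(x_n)<\delta+\rho(\varepsilon)$. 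Letting $\varepsilon\downarrow0$ and then $\delta\downarrow0$ gives $V(y)=0$, i.e.\ $y\in\mathcal{L}$. Hence $\Omega\subseteq\mathcal{L}$, and since $\{z_n\}$ converges to the internally chain transitive set $\Omega\subseteq\mathcal{L}$, the corollary follows.

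The main obstacle is exactly this last step: a priori $\Omega$ may contain points with large $V$-value that merely flow toward $\mathcal{L}$, and a naive telescoping of $V$ along an $\varepsilon$-chain accumulates $n(\varepsilon)$ continuity errors, with $n(\varepsilon)\to\infty$ as $\varepsilon\to0$. The resolution — and the point I would be most careful to get right — is to fix the time horizon $T$ \emph{before} invoking chain transitivity, so that the \emph{uniform} bound $g(T)<\delta$ (which is precisely why Dini's theorem, not just pointwise convergence, is needed, and where the fact that $V(\mathcal{L})=\{0\}$ has empty interior enters) makes each flow segment of the chain independently push $V$ below $\delta$ regardless of the chain's length, leaving only a single $\rho(\varepsilon)$ error. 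A secondary technical point to check is that $\overline{U}\subseteq\mathcal{K}$ and that $\pi$ is genuinely defined and invariant on $\Omega$, so that $\pi(t_i,x_i)\in\mathcal{K}$ throughout; both follow from closedness of $\mathcal{K}$ together with \cref{theorem:conv-numerical,lemma:global-solution-sweeping-process}.
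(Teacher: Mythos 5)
Your argument is correct, but it takes a genuinely different route from the paper. The paper's proof is a direct Benaïm--Borkar-style tracking argument that never invokes \cref{theorem:conv-numerical} or chain transitivity: it introduces sublevel sets $Z^b=\{x\in U: V(x)<b\}$, extracts from the hypotheses a uniform decay rate $-\zeta=\sup_{t\ge 0,\,x\in \bar{Z}^{C}\setminus Z^{\epsilon}}\frac{dV}{dt}<0$ on the compact annulus (with $C=\sup_{\|z\|\le M}V(z)$), deduces that every solution of \cref{eqn:sweeping-process} started in $\bar{Z}^{C}$ enters $Z^{\epsilon}$ within a fixed time $\tau>C/\zeta$, and then combines \cref{lemma:convergence-limiting-behavior} with the uniform continuity of $V$ to trap $u(t)$ in $Z^{2\epsilon}$ for all large $t$, finally letting $\epsilon\downarrow 0$. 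You instead pass first to the internally chain transitive invariant limit set $\Omega$ via \cref{theorem:conv-numerical} and prove $\Omega\subseteq\mathcal{L}$ by a LaSalle step, Dini's theorem, and the fix-$T$-before-$\varepsilon$ chain argument; this is in effect a self-contained reproof, in the special case $V(\mathcal{L})=\{0\}$, of the result underlying \cref{theorem:general-func-V}, and indeed you could have shortened the whole proof by noting that the hypotheses make $V$ a Lyapunov function for $\mathcal{L}$ with $V(\mathcal{L})$ of empty interior and citing \cref{theorem:general-func-V} directly. The trade-off is this: the paper's argument is shorter and needs only \cref{lemma:convergence-limiting-behavior}, but it implicitly requires the supremum of the Lie derivative over all $t\ge 0$ on the annulus to be strictly negative, which for a genuinely time-dependent $\psi$ calls for an extra uniformity-in-$t$ justification; your route avoids that uniform-in-time bound, at the price of needing continuity of the semiflow $\pi(t,\cdot)$ on $\Omega$ (for Dini and for the $\omega$-limit step) and the full strength of \cref{theorem:conv-numerical}. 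One small correction: the empty-interior condition does not ``enter through Dini'' as you suggest --- here it is trivially satisfied since $V(\mathcal{L})=\{0\}$, and what Dini actually requires is the pointwise monotone convergence $V(\pi(n,\cdot))\downarrow 0$ to a continuous limit on the compact set $\Omega$.
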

\begin{proof}
  Note that the corollary is inspired by \cite[p. 19, Corollary 2.1]{2023StochasticApproxDynamics}; we reproduce the proof for the sake of completeness. Let $M = \sup_n \|z_n\| < \infty$ and $C = \sup_{\|z\|\le M} V(z)$. For any constant $0 < b \le C$, we define $Z^b := \{x \in U: V(x) < b\}$. For $0< \epsilon < C/2$, we have
  \begin{equation}
    -\zeta := \sup_{t \ge 0,x \in \bar{Z}^C \backslash Z^\epsilon} \frac{dV}{dt}(t,x) < 0,
  \end{equation}
  where $\bar{Z}^C$ denotes the clousre of $Z^C$. It then follows that
  \begin{equation*}
    V(z(t)) = V(z(0)) + \int_0^t \frac{dV}{ds}(s,z(s)) ds \le V(z(0)) - t \zeta.
  \end{equation*}
  Let $\tau$ be an upper bound on the time required for a solution to \cref{eqn:sweeping-process} starting from $\bar{Z}^C$ to reach $Z^\epsilon$. Hence, we can pick $C/\zeta < \tau < \infty$. Since $\mathcal{K}$ is compact and $V$ is continuously differentiable, $V$ is Lipschitz continuous in $\mathcal{K}$. Then there exists some $\delta > 0$ such that for all $x \in \bar{Z}^C$ and $y \in \mathcal{K}$ with $\|x-y\| < \delta$, we have $|V(x)-V(y)| < \epsilon$. By \cref{lemma:convergence-limiting-behavior}, there exists $t_0$ such that for all $t\ge t_0$, we have $\sup_{t\le s \le t+\tau}\|u(s)-z^t(s)\| < \delta$. Since $u(s) \in \bar{Z}^C$, it follows that $|V(u(t+\tau))-V(z^t(t+\tau))| < \epsilon$, and hence $u(t+\tau) \in Z^{2\epsilon}$ for $z^t(t+\tau) \in Z^\epsilon$. Therefore, $u(t) \in Z^{2\epsilon}$ for all $t\ge t_0+\tau$. Letting $\epsilon \downarrow 0$, we have $u(t) \to \mathcal{L}$ as $t\to \infty$.
\end{proof}

Although this corollary provides a useful tool to address optimization problems in smooth analysis, it fails to apply to composite optimization problem where $V$ is nonsmooth. By contrast, the subsequent theorem offers a general framework.
\begin{theorem}
  \label{theorem:general-func-V}
  Let $\Lambda \subset \mathbb{R}^m$ be any subset. Suppose that $V: \mathbb{R}^m \to \mathbb{R}$ is a Lyapunov function for $\Lambda$ with respect to the trajectory of \cref{eqn:sweeping-process}. Assume that $V(\Lambda)$ has empty interior. Then $\{z_n\}$ converges to an internally chain transitive set $\mathcal{L}$ contained in $\Lambda$ under \cref{assumption:conv-for-iterates} and $V$ is constant in $\mathcal{L}$. 
\end{theorem}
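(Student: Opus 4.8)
The plan is to derive the statement from \cref{theorem:conv-numerical} together with a Conley/Bena\"{\i}m-type argument on internally chain transitive (ICT) sets. By \cref{theorem:conv-numerical}, under \cref{assumption:conv-for-iterates} the sequence $\{z_n\}$ converges to a connected ICT invariant set $\mathcal{L}$ of \cref{eqn:sweeping-process}; since $\{z_n\}$ is bounded, $\mathcal{L}$ is compact. Hence it suffices to show $\mathcal{L}\subseteq\Lambda$ and that $V$ is constant on $\mathcal{L}$. I would first record that the two inequalities defining ``$V$ is a Lyapunov function for $\Lambda$'' together say that $t\mapsto V(\pi(t,x))$ is nonincreasing for \emph{every} $x$, and is \emph{strictly} decreasing (for $t>0$) whenever $x\notin\Lambda$.

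Next comes routine bookkeeping with $\omega$-limit sets. For $x\in\mathcal{L}$, the map $t\mapsto V(\pi(t,x))$ is nonincreasing and bounded below (it stays in the compact set $V(\mathcal{L})$), hence converges to some $v_\infty(x)$; moreover $\omega(x)\subseteq\mathcal{L}$ is positively invariant, $V\equiv v_\infty(x)$ on it, and $\omega(x)\subseteq\Lambda$ — a point $p\in\omega(x)\setminus\Lambda$ would give $V(\pi(1,p))<V(p)=v_\infty(x)$, contradicting $\pi(1,p)\in\omega(x)$. In particular $v_\infty(x)\in V(\Lambda\cap\mathcal{L})\subseteq V(\Lambda)$ for all $x\in\mathcal{L}$. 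Observe also that \emph{once we know $V$ is constant on $\mathcal{L}$}, the inclusion $\mathcal{L}\subseteq\Lambda$ is automatic: $x\in\mathcal{L}\setminus\Lambda$ would force $V(\pi(1,x))<V(x)$, contradicting constancy of $V$ on the invariant set $\mathcal{L}$. So the whole problem reduces to showing $V|_{\mathcal{L}}$ is constant.

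For this reduced claim I would argue by contradiction, and this is where the empty-interior hypothesis enters. If $V|_{\mathcal{L}}$ is not constant, then by connectedness $V(\mathcal{L})=[v^-,v^+]$ with $v^-<v^+$. Since $\Lambda$ is closed (as in all the applications, or after replacing $\Lambda$ by a closed set), $\Lambda\cap\mathcal{L}$ is compact, so $V(\Lambda\cap\mathcal{L})$ is a compact subset of $\mathbb{R}$ with empty interior, hence nowhere dense; therefore there is a nondegenerate closed interval $[\alpha,\beta]\subset(v^-,v^+)$ with $[\alpha,\beta]\cap V(\Lambda\cap\mathcal{L})=\emptyset$. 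The ``band'' $\{x\in\mathcal{L}:\alpha\le V(x)\le\beta\}$ is then compact and disjoint from $\Lambda$, so there $V$ strictly decreases, and by compactness at a uniform rate $\delta_0:=\inf\{V(x)-V(\pi(1,x)):x\in\mathcal{L},\ \alpha\le V(x)\le\beta\}>0$. Consequently every trajectory issued from the open positively invariant set $U:=\{x\in\mathcal{L}:V(x)<\beta\}$ enters $\{x\in\mathcal{L}:V(x)<\alpha\}$ within a uniform integer time $T_0$ and stays there. Now pick $a\in\mathcal{L}$ with $V(a)=v^-<\alpha$ and $b\in\mathcal{L}$ with $V(b)=v^+>\beta$, and take $\varepsilon$ smaller than the (positive) distance between the disjoint compact sets $\{V\le\alpha\}\cap\mathcal{L}$ and $\{V\ge\beta\}\cap\mathcal{L}$. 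Then, inductively along any $(\varepsilon,T_0,\pi)$-chain started at $a$, every point of the chain lies in $U$ (each flow step of length $\ge T_0$ lands in $\{V<\alpha\}$, and a subsequent $\varepsilon$-jump cannot reach $\{V\ge\beta\}$), so the chain never reaches $b$ — contradicting internal chain transitivity of $\mathcal{L}$. (Equivalently, $U$ traps the dynamics into a proper attractor of $\pi|_{\mathcal{L}}$, which an ICT set cannot contain.) Hence $V|_{\mathcal{L}}$ is constant, which together with $\{z_n\}\to\mathcal{L}$ and $\mathcal{L}\subseteq\Lambda$ is the full assertion.

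The main obstacle is precisely this last step: converting the soft hypothesis ``$V(\Lambda)$ has empty interior'' into a hard dynamical obstruction. Two points need care: (i) extracting a genuinely $\Lambda$-free sublevel band $[\alpha,\beta]$, which needs $V(\Lambda\cap\mathcal{L})$ to be closed, hence the mild closedness hypothesis on $\Lambda$; and (ii) verifying that such a band yields a uniform drift $\delta_0>0$ and therefore a bona fide trapping region incompatible with chain transitivity. Everything else is the standard $\omega$-limit/monotonicity bookkeeping for the sweeping-process semiflow.
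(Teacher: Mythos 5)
Your overall route is the same as the paper's: the paper proves this theorem in one line by combining \cref{theorem:conv-numerical} with Proposition 3.27 of Bena\"{\i}m--Hofbauer--Sorin (the statement that a Lyapunov function for $\Lambda$ with $V(\Lambda)$ of empty interior forces every internally chain transitive set into $\Lambda$ with $V$ constant there). You are reproving that cited proposition from scratch, and most of your reconstruction --- the $\omega$-limit bookkeeping, the observation that constancy of $V$ on $\mathcal{L}$ forces $\mathcal{L}\subseteq\Lambda$, and the trapping/chain argument once a uniform-decrease band is in hand --- is sound.

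The genuine gap is exactly where you flagged it: the extraction of a nondegenerate interval $[\alpha,\beta]\subset(v^-,v^+)$ disjoint from $V(\Lambda\cap\mathcal{L})$. ``Empty interior'' does not imply ``nowhere dense'' for a non-closed set (think of $\mathbb{Q}\cap[0,1]$), and the theorem assumes $\Lambda$ is \emph{any} subset. Your proposed repair --- replacing $\Lambda$ by its closure --- preserves the Lyapunov property but not the hypothesis: $V(\overline{\Lambda})=\overline{V(\Lambda)}$ can have nonempty interior even when $V(\Lambda)$ does not. The correct fix needs only a single value rather than an interval: since $V(\Lambda\cap\mathcal{L})$ has empty interior, pick one $c\in(v^-,v^+)\setminus V(\Lambda\cap\mathcal{L})$. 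The level set $K_c=\{x\in\mathcal{L}:V(x)=c\}$ is compact, nonempty (connectedness of $\mathcal{L}$ plus the intermediate value theorem), and disjoint from $\Lambda$, so $\delta:=\min_{x\in K_c}\bigl(V(x)-V(\pi(1,x))\bigr)>0$; by continuity and compactness of $\mathcal{L}$ there is $\eta>0$ with $V(x)-V(\pi(1,x))\ge\delta/2$ whenever $x\in\mathcal{L}$ and $|V(x)-c|\le\eta$. Setting $\alpha=c-\eta$, $\beta=c+\eta$ (shrinking $\eta$ so that $v^-<\alpha<\beta<v^+$) recovers precisely the uniform-drift band you need, and the remainder of your chain argument then goes through verbatim for an arbitrary subset $\Lambda$.
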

\begin{proof}
  The results follow from \cite[Proposition 3.27]{2005StochasticApproxDifferentialInclusion} and \cref{theorem:conv-numerical}.
\end{proof}


Indeed, the conclusions regarding the numerical scheme, which are derived from an initial continuous-time PSP, can be viewed from the reversed direction. Specifically, given a numerical scheme
\begin{equation}
  z_{k+1} = P_{\mathcal{K}} [z_k + h_{k+1} \phi_k(z_k)],
\end{equation}
along with its corresponding continuous-time dynamics
\begin{equation}
  -\dot{z}(t) \in \psi(t,z) + N_{\mathcal{K}}(z),
\end{equation}
where $\psi(t_k,z) = \phi_k(z)$ for all $k\in\mathbb{N}$ and $z \in \mathcal{H}$, it follows that the aforementioned conclusions still hold.

\begin{remark}
  Consider a perturbed stochastic scheme, i.e.,
  \begin{equation}
    \tilde{z}_{k+1} = P_{\mathcal{K}}[\tilde{z}_k - h_{k+1}\psi(t_k,\tilde{z}_k) - h_{k+1}(U_{k+1}+r_{k+1})],
  \end{equation} 
  where $\{U_k\}$ and $\{r_k\}$ are sequences of random perturbations. Assume that for each $T>0$, 
  \begin{equation}
    \label{eqn:small-perturbation}
    \lim_{n \to \infty} \sup_{\{k:0\le t_k - t_n \le T\}} \left\| \sum_{\ell=n}^{k} h_{\ell} U_\ell \right\| = 0, \ \text{a.s.},
  \end{equation}
  and $\lim_{k \to \infty} r_k = 0$ a.s. Then the conclusions a,,nd corresponding analysis above for a deterministic scheme hold almost surely for the stochastic scheme, following standard analysis of the classical result \cite{1996DynSystApprochStocApprox}. 
\end{remark}

Via straightforward application of this remark, we immediately obtain the following useful corollary:
\begin{corollary}
  \label{corollary:asymptotic-general-conv}
  Consider an asymptotic numerical scheme
  \begin{equation}
    \label{eqn:asymptotic-numerical-scheme}
    \bar{z}_{k+1} = P_{\mathcal{K}}[\phi_k(\bar{z}_k) - h_{k+1} (\psi(t_k,\bar{z}_k)+\xi_{k+1})],
  \end{equation}
  where $\phi_k: \mathbb{R}^m \to \mathbb{R}^m$ is continuous for all $k$, and $\{\xi_k\}$ is a sequence of random perturbations satisfying \cref{eqn:small-perturbation}. Let \cref{assumption:conv-for-iterates} hold. Assume that
  \begin{equation}
    \lim_{n \to \infty} \sup_{x \in \mathbb{R}^m} \left\| \phi_n (x) - x \right\|/h_{n+1} = 0.
  \end{equation}
  The conclusion of \cref{theorem:general-func-V} holds for \cref{eqn:asymptotic-numerical-scheme}.
\end{corollary}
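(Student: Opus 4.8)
The plan is to reduce the asymptotic scheme \cref{eqn:asymptotic-numerical-scheme} to a perturbed explicit Euler scheme of exactly the form treated in the preceding remark, and then invoke that remark together with \cref{theorem:general-func-V}. Concretely, I would first use the algebraic identity
\[
\phi_k(\bar z_k) - h_{k+1}\bigl(\psi(t_k,\bar z_k)+\xi_{k+1}\bigr) = \bar z_k - h_{k+1}\Bigl(\psi(t_k,\bar z_k)+\xi_{k+1}+r_{k+1}\Bigr),
\]
where $r_{k+1} := \bigl(\bar z_k - \phi_k(\bar z_k)\bigr)/h_{k+1}$. Substituting this into the projection shows that the iterates $\{\bar z_k\}$ generated by \cref{eqn:asymptotic-numerical-scheme} are precisely those of the perturbed stochastic scheme in the remark with $U_{k+1}=\xi_{k+1}$ and perturbation sequence $\{r_k\}$ as above.

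Next I would verify the two smallness hypotheses required to apply the remark. The sequence $\{\xi_k\}$ satisfies \cref{eqn:small-perturbation} by assumption, so it plays the role of $\{U_k\}$ directly. For $\{r_k\}$, the uniform bound $\|r_{k+1}\| = \|\phi_k(\bar z_k)-\bar z_k\|/h_{k+1} \le \sup_{x\in\mathbb{R}^m}\|\phi_k(x)-x\|/h_{k+1}$, combined with the hypothesis $\lim_{n\to\infty}\sup_{x\in\mathbb{R}^m}\|\phi_n(x)-x\|/h_{n+1}=0$, gives $r_k\to 0$; this holds deterministically, hence a.s. Since \cref{assumption:conv-for-iterates} is in force (in particular $\{\bar z_k\}$ is bounded, $\psi$ is sweeping-regular on $([0,t],\mathcal{K})$, the step sizes obey \cref{eqn:step-size-requirement} with $\sum_k h_k^3<\infty$, and the weak monotonicity and bounded-variation summability conditions hold), \cref{lemma:convergence-limiting-behavior} and \cref{theorem:conv-numerical} apply to the unperturbed Euler scheme, and the remark then guarantees that the entire deterministic analysis — in particular \cref{theorem:general-func-V} — carries over almost surely once $\{U_k\}$ and $\{r_k\}$ satisfy the stated smallness conditions. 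Combining these observations: $V$ is a Lyapunov function for $\Lambda$ with $V(\Lambda)$ of empty interior, so \cref{theorem:general-func-V} applies to the associated sweeping process, and its conclusion transfers to \cref{eqn:asymptotic-numerical-scheme}; thus $\{\bar z_k\}$ converges almost surely to an internally chain transitive set $\mathcal{L}\subset\Lambda$ on which $V$ is constant.

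The main point requiring care — indeed essentially the only one — is the passage from the $x$-uniform hypothesis on $\phi_n$ to the vanishing of the realized perturbation $r_{k+1}$ evaluated at the random iterate $\bar z_k$. Here this is immediate because the hypothesis is uniform over all of $\mathbb{R}^m$; it is worth remarking that a merely pointwise or locally uniform assumption on $\phi_n$ would instead force one to first exploit the boundedness of $\{\bar z_k\}$ from \cref{assumption:conv-for-iterates} to localize the estimate before concluding $r_k\to 0$. A secondary bookkeeping matter is checking that inserting the extra perturbation $r_{k+1}$ does not disturb the summability conditions governing the analysis of the unperturbed scheme — it does not, since those conditions involve only $\psi$, $\{h_k\}$ and $\{S_k\}$, while $r_k$ enters the argument solely through the stochastic-approximation comparison established in the remark.
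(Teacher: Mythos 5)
Your proposal is correct and matches the paper's intended argument exactly: the paper derives this corollary as a "straightforward application" of the preceding remark, absorbing $\phi_k(\bar z_k)-\bar z_k$ into a vanishing perturbation $r_{k+1}=(\bar z_k-\phi_k(\bar z_k))/h_{k+1}$ whose decay follows from the uniform hypothesis on $\phi_n$, with $\xi_k$ playing the role of $U_k$. Your write-up simply makes explicit the details the paper leaves implicit.
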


\begin{figure}[t]
  \centering
  \includegraphics[width=0.48\textwidth]{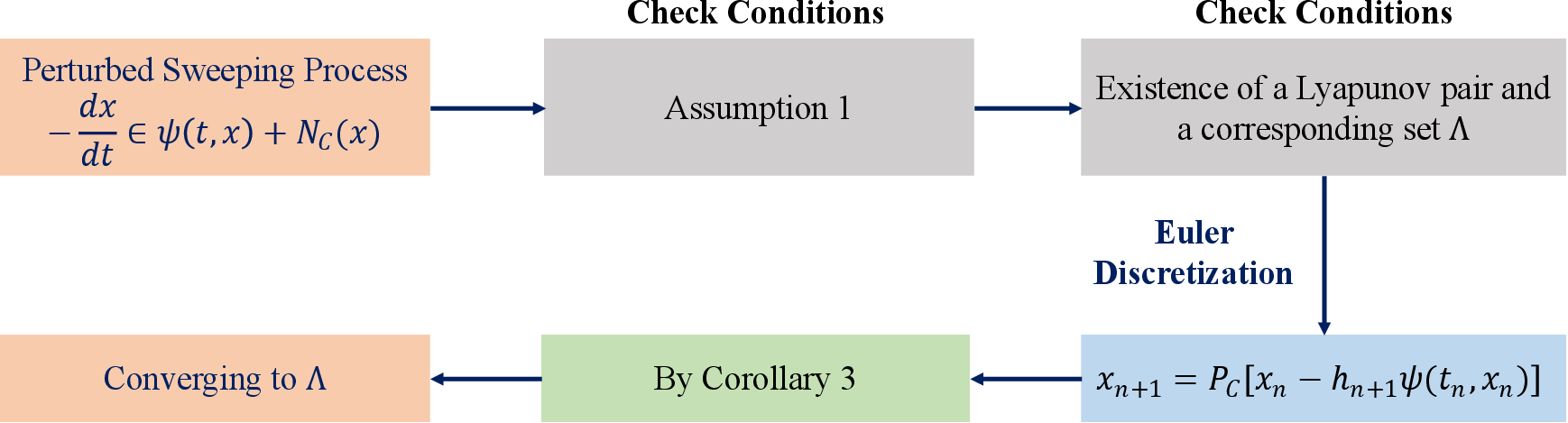}  
  \caption{The conceptual framework to design a convergent projected method.}
  \label{fig:conv-plot}
\end{figure}

\begin{remark}
  A typical example of $\{\xi_k\}$ which satisfies \cref{eqn:small-perturbation} is martingale difference noise. Use $\mathscr{F}_k$ to represent the filtration of $\{\bar{z}_1,\bar{z}_2,\dots,\bar{z}_k\}$. Let $\xi_k$ satisfy $\mathbb{E}[\xi_{k+1}|\mathscr{F}_k] = 0$ and $\mathbb{E}[\|\xi_{k+1}\|^2|\mathscr{F}_k] \le \mu(1+\|\bar{z}_k\|^2)$ for some constant $\mu > 0$ for all $k$. We consider the finite sum $\zeta_n = \sum_{i=1}^n h_i \xi_i$. Since $\sum_{i=1}^\infty h_i^2 < \infty$ and $\{\bar{z}_k\}$ is bounded a.s., we have
  \begin{equation}
    \sum_{\ell = 0}^\infty \mathbb{E}[\|\zeta_{\ell+1} - \zeta_\ell\|^2|\mathscr{F}_\ell] = \sum_{\ell = 0}^\infty h_\ell^2 \mathbb{E}[\|\xi_{\ell+1}\|^2|\mathscr{F}_\ell] < \infty
  \end{equation}
  a.s. It follows by Doob's martingale convergence theorem that $\{\zeta_n\}$ converges. Therefore, we conclude that
  \begin{equation}
    \lim_{n\to \infty} \sup_{m>0} \|\zeta_{n+m} - \zeta_n\| = 0, \text{ a.s.}
  \end{equation}
\end{remark}

With the above theoretical results, we summarize a conceptual framework to design a provably convergent projected compressed method as presented in \cref{fig:conv-plot}. We note that it is quite tricky to numerically analyze a momentum-based method for non-convex optimization problems especially with a biased compressor. In addition, such analysis is usually case-by-case due to lack of deep understanding of underlying dynamical representations. By contrast, this unifying framework offers a convenient way to guarantee theoretical convergence.
\vspace{-8pt}
\section{Application to Constrained Optimization}
\label{sec:appl}

In this section, we firstly justify the validity of the above established theoretical results by providing some examples of convergence analysis of projected variants of existing popular optimization methods. Moreover, we present projected compressed schemes with compressors, of which the convergence can be immediately established by \cref{corollary:asymptotic-general-conv}.

\vspace{-6pt}
\subsection{Schemes with Exact Inputs}
As demonstrated in \cref{assumption:conv-for-iterates}, the time-varying vector field is not restricted to be continuous with respect to time (the first argument). Therefore, it is feasible to add countable bounded jump discontinuities to the vector field. This, in turn, supports numerical schemes with a constant step size as we can add a cofactor to cancel the vanishing step size. 

We begin with the standard stochastic gradient descent for constrained optimization.

\textbf{\emph{Example 1}: Stochastic projected gradient descent (PGD).} Consider the dynamics given by
\begin{equation}
  \frac{dz}{dt} \in -\nabla f(z) - N_{\mathscr{C}}(z(t)),
\end{equation}
where $\mathscr{C} \subset \mathbb{R}^m$ is a compact convex set, $f:\mathbb{R}^m \to \mathbb{R}$ is lower-bounded and has a Lipschitz-continuous gradient. Taking the step size $h_k = 1/k$, we have for a martingale difference noise sequence $\{\xi_k\}$ with bounded variance:
\begin{equation}
  \label{eqn:stochastic-pgd}
  z_{n+1} = P_{\mathscr{C}}[z_n - h_{n+1} (\nabla f(z_n) + \xi_{n+1})],
\end{equation}
which is the classical form of a stochastic projected gradient descent method. Its convergence can be established immediately by selecting $f$ as the Lyapunov function and using the facts from \cite[Proposition 2]{2006Equivalence} according to \cref{theorem:general-func-V}.

\textbf{\emph{Example 2}: Projected Nesterov accelerated gradient (PNAG).} Consider the following perturbed sweeping process:
\begin{equation}
  \label{eqn:proj-continuous-Nesterov-form2}
  \left\{
  \begin{aligned}
    \frac{dx}{dt} &= \kappa(t) [y -x - \gamma \nabla f(y)], \\
    \frac{dy}{dt} &\in \kappa(t) [\mu y - \mu x- \nu \nabla f(y) - N_{C}(y)],
  \end{aligned}
  \right.
\end{equation}
where $\kappa(t)$ is defined by $\kappa(t):= \sup\{\sqrt{k+1}: k\in \mathbb{N}, \tau_k\le t\}$, $\tau_k = \sum_{\ell=0}^k \sqrt{\ell}$ for all $k \in \mathbb{N}$, and $\gamma>0$ and $0<\mu<1$ are positive constants. $f:\mathbb{R}^m \to \mathbb{R}$ is lower-bounded and has a Lipschitz-continuous gradient, $\nu = \gamma (1+\mu)$, and $C \subset \mathbb{R}^m$ is a compact and convex subset. Taking the step size $h_{k+1} = 1/\kappa(t_k)$, the Euler discretization produces 
\begin{equation}
  \label{eqn:PNAG-form2}
  \left\{
  \begin{aligned}
    x_{n+1} &= y_n - \gamma \nabla f(y_n),\\
    y_{n+1} &= P_C[x_{n+1} + \mu(x_{n+1} - x_n)].
  \end{aligned}
  \right.
\end{equation}
Then we have the following corollary:
\begin{corollary}
  \label{corollary:conv-proj-Nesterov-form2}
  Let $\mathcal{L}$ be the set of KKT points of $f$ on $C$ associated with the constrained optimization problem $\min_{x \in C} f(x)$. If $f(\mathcal{L})$ has empty interior, $(y_n)$ of the numerical scheme \cref{eqn:PNAG-form2} converges to $\mathcal{L}$.
\end{corollary}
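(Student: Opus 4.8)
The plan is to recognize \cref{eqn:PNAG-form2} as the explicit Euler scheme \cref{eqn:projected-numerical} of the PSP \cref{eqn:proj-continuous-Nesterov-form2} and then invoke \cref{theorem:general-func-V}. First I would lift the coupled system to a single differential inclusion on $\mathbb{R}^{2m}$: put $Z=(x,y)$, $\mathcal{K}=\mathbb{R}^m\times C$ (so that $N_{\mathcal{K}}(x,y)=\{0\}\times N_C(y)$), and $\psi(t,Z)=\kappa(t)\,\bigl(x-y+\gamma\nabla f(y),\ \mu(x-y)+\nu\nabla f(y)\bigr)$. Since $\kappa(t)>0$ and $N_C(y)$ is a cone, $\kappa(t)N_C(y)=N_C(y)$, so \cref{eqn:proj-continuous-Nesterov-form2} is precisely $-\dot Z\in\psi(t,Z)+N_{\mathcal{K}}(Z)$. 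Because $h_{k+1}=1/\kappa(t_k)$ exactly cancels the dilation, $h_{k+1}\psi(t_k,Z_k)=\bigl(x_k-y_k+\gamma\nabla f(y_k),\ \mu(x_k-y_k)+\nu\nabla f(y_k)\bigr)$, and a one-line computation using $\nu=\gamma(1+\mu)$ shows that $P_{\mathcal{K}}\bigl[Z_k-h_{k+1}\psi(t_k,Z_k)\bigr]$ reproduces \cref{eqn:PNAG-form2}. Hence \cref{eqn:PNAG-form2} is the scheme \cref{eqn:projected-numerical} for this PSP, and the KKT set $\mathcal{L}$ of $\min_{x\in C}f(x)$ is the $y$-component of the equilibrium set $\Lambda:=\{(y^{\star}-\gamma\nabla f(y^{\star}),y^{\star}):y^{\star}\in\mathcal{L}\}$ of \cref{eqn:proj-continuous-Nesterov-form2}.

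Next I would verify \cref{assumption:conv-for-iterates} for this lifted PSP. Boundedness of $\{(x_n,y_n)\}$ is immediate: $y_n\in C$ which is compact, and $x_{n+1}=y_n-\gamma\nabla f(y_n)$ with $\nabla f$ Lipschitz, hence bounded on $C$. Sweeping-regularity of $\psi$ on every $([0,t],\mathcal{K})$ follows from \cref{lemma:composite-sweeping-regular}, the Lipschitz continuity of $\nabla f$, and the boundedness of $\kappa$ on bounded time intervals; the linear-growth bound is then clear. The step-size requirements \cref{eqn:step-size-requirement} and $\sum_k h_k^3<\infty$, together with the bounded-variation sums, come from the construction of $\kappa$ through $\{\tau_k\}$, which forces a controlled polynomial decay of $h_k$ and gives $S_k=O(h_k)$ (because $\kappa$ is piecewise constant and jumps by $O(h_k)$ between consecutive nodes, so $S_k$ vanishes unless such a jump falls in $[t_{k-1},t_k]$). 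Finally, expanding $\langle\psi(t,Z)-\psi(t,Z'),Z-Z'\rangle$ one finds that the $\|x-x'\|^2$ contribution is favourable, while the cross term $\langle y-y',x-x'\rangle$ and the two gradient terms are absorbed via Young's inequality and $L$-Lipschitzness of $\nabla f$; this yields the weak-monotonicity bound \cref{eqn:weak-monotonicity} with $\gamma(t)=-c\,\kappa(t)$ for some $c=c(\mu,\gamma,L)>0$, after which one must check that the induced windowed integrals $\int_s^t\gamma$ stay bounded below on $\{0\le t-s\le T\}$ --- this is where the precise growth of $\{\tau_k\}$ is used.

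With \cref{assumption:conv-for-iterates} in place, \cref{theorem:conv-numerical} already gives convergence of $\{(x_n,y_n)\}$ to an internally chain transitive invariant set of the PSP; to confine this set to $\Lambda$ I would apply \cref{theorem:general-func-V} with the energy $V(x,y)=f(y)+\tfrac{1}{2\gamma}\|x-y\|^2$, or --- so that the hypothesis on the interior is transparent --- the shifted energy $\widehat V(x,y)=f(y)+\tfrac{c_0}{2}\|x-y+\gamma\nabla f(y)\|^2$, which is constant (equal to $f(y^{\star})$) along each fibre of $\Lambda$, whence $\widehat V(\Lambda)=f(\mathcal{L})$ has empty interior by assumption. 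Using the equivalence of \cref{eqn:sweeping-process} with the tangential ODE $\dot Z=P_{\mathcal{T}_{\mathcal{K}}(Z)}[-\psi(t,Z)]$ and Moreau's orthogonal decomposition, the Lie derivative of $V$ along the flow reduces to $\dot V=-\kappa(t)\bigl(\nu\|\nabla f(y)\|^2+\tfrac{1-\mu}{\gamma}\|x-y\|^2\bigr)$ up to a normal-cone term, and $0<\mu<1$ together with $\nu>0$ makes the leading part strictly negative off $\Lambda$ and zero on $\Lambda$; thus $V$ (resp.\ $\widehat V$) is a Lyapunov function for $\Lambda$. \cref{theorem:general-func-V} then yields convergence of $\{(x_n,y_n)\}$ to an internally chain transitive set contained in $\Lambda$, and in particular $(y_n)\to\mathcal{L}$.

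The step I expect to be the main obstacle is twofold. First, the time-dilation $\kappa(t)\to\infty$ makes $\psi$ non-uniformly Lipschitz in time with a state-Lipschitz constant that blows up, so one must exploit the exact growth rate encoded in $\{\tau_k\}$ to keep $\sum_k h_k^3$, the bounded-variation series, and the windowed integrals of $\gamma(t)=-c\,\kappa(t)$ under control simultaneously; balancing these is delicate. Second, handling the normal-cone term in $\dot V$ is nontrivial because $x$ is unconstrained but slaved to the constrained variable $y$: the naive energy need not be monotone, and one has to use the cancellation afforded by $\nu=\gamma(1+\mu)$ together with the tangential-ODE reformulation (and, for the empty-interior reduction, the shifted energy $\widehat V$) to certify a Lyapunov function for $\Lambda$.
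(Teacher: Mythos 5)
Your overall architecture is the same as the paper's: lift to $Z=(x,y)$ with $\mathcal{K}=\mathbb{R}^m\times C$, take the energy $\tfrac12\|x-y\|^2+\gamma f(y)$ (yours is this divided by $\gamma$), show it dissipates along the PSP, identify $\Lambda$ with the zero set of the dissipation, and invoke \cref{theorem:general-func-V}. The genuine gap is in the step you yourself flag as the obstacle: your claimed Lie derivative $\dot V=-\kappa(t)\bigl(\nu\|\nabla f(y)\|^2+\tfrac{1-\mu}{\gamma}\|x-y\|^2\bigr)$ ``up to a normal-cone term'' cannot be used to identify $\Lambda$, because the normal-cone contribution is not a lower-order correction --- it is exactly what converts $\|\nabla f(y)\|^2$ into $\|P_{\mathcal{T}_C(y)}[-\nabla f(y)]\|^2$. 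With $\|\nabla f(y)\|^2$ the dissipation is strictly positive at every boundary KKT point where $\nabla f\neq 0$, so $V$ would not be a Lyapunov function for any set containing $\mathcal{L}$, and your argument would only confine the limit set to interior critical points; moreover your own $\Lambda=\{(y^\star-\gamma\nabla f(y^\star),y^\star)\}$ is not the zero set of your claimed dissipation (which vanishes on $x=y$, $\nabla f(y)=0$), so the hypotheses of \cref{theorem:general-func-V} are not met as written. The paper resolves precisely this point with \cref{lemma:Lyapunov-pair}: one writes $-\nabla f(y)=P_{\mathcal{T}_C(y)}[-\nabla f(y)]+u_0$ with $u_0=P_{N_C(y)}[-\nabla f(y)]$ (Moreau), takes $W=\kappa(t)\bigl((1-\mu)\|x-y\|^2+\gamma\nu\|P_{\mathcal{T}_C(y)}[-\nabla f(y)]\|^2\bigr)$, computes $\langle\nabla V,-\psi-(0,u)^T\rangle=-W-\gamma\mu\|u_0\|^2-\langle y-x+\gamma\nabla f(y),u\rangle$ for $u\in N_C(y)$, and uses $0\in N_C(y)$ so that the minimum over the normal-cone ball is $\le -W$; this certifies $(V,W)$ as a Lyapunov pair and gives $\Lambda=\{(x,y):x=y,\ P_{\mathcal{T}_C(y)}[-\nabla f(y)]=0\}$, on which $V=\gamma f(\mathcal{L})$ directly, so no shifted energy $\widehat V$ is needed.

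A second issue, which you partially anticipate but do not resolve: your verification of the weak-monotonicity item of \cref{assumption:conv-for-iterates} with modulus $\gamma(t)=-c\,\kappa(t)$, $c>0$, cannot work as stated, since $\kappa(t)\to\infty$ forces $\int_s^{s+T}\gamma(\tau)\,d\tau=-cT\kappa(s)+o(\kappa(s))\to-\infty$, violating the requirement that the windowed integrals be bounded below (and the field genuinely fails to be monotone --- take $\Delta x=0$ and $f$ affine to see the $-\mu\|\Delta y\|^2$ term survive). To be fair, the paper's proof of \cref{corollary:conv-proj-Nesterov-form2} does not verify \cref{assumption:conv-for-iterates} for the $\kappa$-dilated field at all, so on this point your proposal is more honest about where the difficulty lies; but neither your sketch nor the choice $\gamma(t)=-c\,\kappa(t)$ closes it.
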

\begin{proof}
  Denote the composite vector field $\psi(t,p,q)$ by
  \begin{equation}
    \psi(t,x,y) := \left(
    \begin{matrix}
      \kappa(t) [y -x - \gamma \nabla f(y)] \\
      \kappa(t) [\mu y - \mu x- \nu \nabla f(y)]
    \end{matrix}
    \right).
  \end{equation}
  Consider the differentiable function $V:\mathbb{R}^m\times \mathbb{R}^m \to \mathbb{R}$ as
  \begin{equation}
    V(x,y) = \frac{1}{2}\|x-y\|^2 + \gamma f(y).
  \end{equation}
  Take $W(t,x,y) = \kappa(t) ((1-\mu)\|x-y\|^2 + R(y))$ for
  \begin{equation}
    \begin{aligned}
      R(y) &= \gamma \nu \langle -\nabla f(y) - u_0, -\nabla f(y) \rangle \\
      &= \gamma \nu \|P_{\mathcal{T}_C(y)}[-\nabla f(y)]\|^2,
    \end{aligned}
  \end{equation}
  where $u_0 = P_{N_C(y)}[-\nabla f(y)]$. For all $u \in N_C(y)$, 
  \begin{equation}
    \begin{aligned}
      \langle \nabla V(x,y), -\psi(t,x,y)-(0,u)^T \rangle = -W(t,x,y) \\ - \gamma \mu \|u_0\|^2 - \langle y-x+\gamma \nabla f(y), u \rangle.
    \end{aligned}
  \end{equation}
  Since $0 \in N_C(y)$, we conclude that for $\zeta = \nabla V(x,y)$
  \begin{equation}
    \min_{\|u\| \le \psi(t,x,y)} \langle \zeta, -\psi(t,x,y)-(0,u)^T \rangle + W(t,x,y) \le 0.
  \end{equation}
  By \cref{lemma:Lyapunov-pair}, $(V,W)$ is a Lyapunov pair. Observe that only if $W=0$, $V(x(t),y(t)) \le V(x(s),y(s))$ for all $t > s$. Therefore, it is sufficient to consider the set $\Lambda = \{(x,y): x=y, P_{\mathcal{T}_C(y)}[-\nabla f(y)] = 0, y \in C\}$ such that $W(\Lambda) = 0$. It is clear that $\Lambda|_x$ coincides with the KKT point set of the constrained problem $\min_{x\in C} f(x)$. By assumption, $\Lambda$ is nonempty and $V(\Lambda)$ has empty interior. Clearly, $V$ is a Lyapunov function for $\Lambda$ by definition and the desired result can be obtained via \cref{theorem:general-func-V}. 
\end{proof}

\begin{remark}
  In practice, NAG is often utilized with time-varying step sizes, implying that the parameter $\mu_k$ evolves throughout the iterative process. As a consequence, the corresponding continuous-time model must be formulated to account for this temporal variability. Notably, the analysis presented herein remains valid in this context, given that the Lyapunov function $V$ is not explicitly dependent on time, i.e., $\partial V/\partial t = 0$. However, this is not generally the case as will be discussed in the next example.
\end{remark}

\textbf{\emph{Example 3}: Projected optimized gradient (POGM).} Consider the following dynamical system:
\begin{equation}
  \left\{
  \begin{aligned}
    \frac{dx}{dt} &= \kappa(t) [y -x - \gamma \nabla f(y)], \\
    \frac{dy}{dt} &\in \kappa(t) [ \mu(t) y - \mu(t) x- \beta(t) \nabla f(y) - N_{C}(y)],
  \end{aligned}
  \right.
\end{equation}
where $\beta(t) = \gamma (1+\mu(t) + \lambda(t))$ and the time-varying step sizes are defined as for all $n \in \mathbb{N}$ and $t_n \le t < t_{n+1}$
\begin{equation}
  1 > \mu(t) = \mu(t_n) = \mu_n > 0, \ \lambda(t) = \lambda(t_n) = \lambda_n > 0,
\end{equation}
where $t_n = \sum_{\ell=0}^n h_\ell$ and $\sup_n \lambda_n < \infty$. The other parameters are set according to Example 2. By Euler discretization, we obtain the following projected variant of the optimized gradient method \cite{2016OptimizedGradientDescent}:
\begin{equation}
  \label{eqn:POGM}
  \begin{aligned}
    x_{n+1} \!&= \! y_n - \gamma \nabla f(y_n), \\
    y_{n+1} \! &=\! P_C\! \left[ x_{n+1} \!+\! \mu_{n+1}\! (x_{n+1} \!- \!x_n) \!- \!\gamma \lambda_{n+1} \nabla f(y_n)\right],
  \end{aligned}
\end{equation}
Letting $\lambda_k / \mu_k$ decrease with respect to $k$, we have the following convergence result:
\begin{corollary}
  \label{corollary:conv-POGM}
  Let $\mathcal{L}$ be the set of KKT points of $f$ on $C$ associated with the constrained optimization problem $\min_{x \in C} f(x)$. If $f(\mathcal{L})$ has empty interior, $(y_n)$ of the numerical scheme \cref{eqn:POGM} converges to $\mathcal{L}$.
\end{corollary}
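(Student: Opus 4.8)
The strategy mirrors the proof of \cref{corollary:conv-proj-Nesterov-form2}: recognize \cref{eqn:POGM} as the explicit Euler discretization, with step $h_{k+1}=1/\kappa(t_k)$, of the displayed PSP $-\dot z\in\psi(t,z)+N_C(z)$ with composite drift built from $(\kappa(t)[y-x-\gamma\nabla f(y)],\ \kappa(t)[\mu(t)y-\mu(t)x-\beta(t)\nabla f(y)])$ — the cofactor $\kappa(t)$ exactly cancels the vanishing step, so the $x$-update collapses to $x_{n+1}=y_n-\gamma\nabla f(y_n)$ — then produce a Lyapunov pair for that PSP and conclude via \cref{theorem:general-func-V}. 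As a preliminary I would check \cref{assumption:conv-for-iterates} for this scheme: boundedness of $(x_n,y_n)$ (since $y_n\in C$ compact and the $x$-update sends bounded sets to bounded sets); sweeping-regularity of $\psi$ on every $([0,t],C)$ from Lipschitzness of $\nabla f$ and boundedness of $\kappa,\mu,\lambda$; the step sizes behaving like $k^{-1/2}$, so $h_k\to0$, $\sum_k h_k=\infty$, $\sum_k h_k^3<\infty$; weak monotonicity of $\psi$ from Lipschitzness of $\nabla f$; and the bounded-variation condition because $\mu(t),\lambda(t)$ are piecewise constant with summably controlled increments.

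The core step is the construction of a time-dependent Lyapunov pair $(V,W)$ — this is where the situation genuinely departs from Example 2, as anticipated in the remark after \cref{corollary:conv-proj-Nesterov-form2}. Starting from the Example 2 candidate $\tfrac12\|x-y\|^2+\gamma f(y)$ and computing $\langle\nabla V,-\psi-(0,u)^{T}\rangle$ exactly as in that proof, the extra term $-\gamma\lambda(t)\nabla f(y)$ in the $\dot y$-equation (absent when $\lambda\equiv0$) leaves an indefinite residue proportional to $\kappa(t)\gamma\lambda(t)\langle y-x,\nabla f(y)\rangle$. I would absorb this residue either by completing the square against the sign-definite terms $\kappa(t)(1-\mu(t))\|x-y\|^2$ and $\kappa(t)\gamma\beta(t)\|P_{\mathcal{T}_C(y)}[-\nabla f(y)]\|^2$, or by adding to $V$ a correction term with weight a function of $\lambda(t)/\mu(t)$ that vanishes on the diagonal $\{x=y\}$; the surviving nonnegative quantity is then taken as $W(t,x,y)$, and the verification reduces, as in Example 2 (using $0\in N_C(y)$ in the minimization), to the pointwise inequality of \cref{lemma:Lyapunov-pair}. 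The hypothesis that $\lambda_k/\mu_k$ decreases is consumed precisely here: $\lambda(t)/\mu(t)$ being piecewise constant and nonincreasing, each of its jumps can only lower the value of the time-dependent part of $V$, so the integral inequality of \cref{definition:Lyapunov-pair} survives the jumps.

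With $(V,W)$ in hand the endgame follows the PNAG template. The dissipation $W$ vanishes exactly on $\Lambda:=\{(x,y):x=y,\ P_{\mathcal{T}_C(y)}[-\nabla f(y)]=0,\ y\in C\}$, whose $y$-section is the KKT set $\mathcal{L}$ of $\min_{x\in C}f(x)$ (since $0\in\nabla f(y)+N_C(y)$ iff $P_{\mathcal{T}_C(y)}[-\nabla f(y)]=0$); $V$ restricts to $\gamma f$ plus a constant on $\Lambda$, so $V(\Lambda)$ has empty interior iff $f(\mathcal{L})$ does; and $V$ is a Lyapunov function for $\Lambda$ in the paper's sense. \cref{theorem:general-func-V} then yields convergence of $(x_n,y_n)$ to an internally chain transitive invariant set contained in $\Lambda$, hence $(y_n)\to\mathcal{L}$. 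Because $V$ is genuinely time-dependent here, if a direct appeal to \cref{theorem:general-func-V} is not clean I would instead route through \cref{corollary:asymptotic-general-conv}, using that $\lambda_k/\mu_k$ is monotone and bounded, hence convergent, so that after the $\kappa$-time change the dynamics become asymptotically autonomous and the limiting time-independent Lyapunov function governs the $\omega$-limit set.

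I expect the main obstacle to be exactly the construction in the second paragraph: finding the modification of $V$ (or the square-completion) that simultaneously cancels the indefinite $\langle y-x,\nabla f(y)\rangle$ term, leaves $W\ge0$, keeps $V$ equal to $\gamma f$ up to a constant on $\Lambda$, and behaves monotonically under the jumps of $\lambda_k/\mu_k$; all the remaining steps — checking \cref{assumption:conv-for-iterates}, identifying $\Lambda$ with $\mathcal{L}$, relating empty interior of $V(\Lambda)$ to that of $f(\mathcal{L})$, and invoking \cref{theorem:general-func-V} — are routine once this construction is in place.
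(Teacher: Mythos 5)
Your overall strategy is the paper's: read \cref{eqn:POGM} as the Euler discretization of the displayed PSP, build a time-dependent Lyapunov pair via \cref{lemma:Lyapunov-pair}, and close with \cref{theorem:general-func-V}; you also correctly isolate the new difficulty in the indefinite residue $\kappa(t)\gamma\lambda(t)\langle y-x,\nabla f(y)\rangle$ and correctly place where the monotonicity of $\lambda_k/\mu_k$ is consumed (the downward jumps of the time-dependent part of $V$). The gap is that the one genuinely new step --- the modification of $V$ --- is left unexecuted, and neither of the two concrete routes you offer reaches the construction that works. The paper's fix is to rescale the potential term: $V(t,x,y)=\tfrac12\|x-y\|^2+\gamma\bigl(1+\lambda(t)/\mu(t)\bigr)f(y)$. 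Writing $\sigma=\gamma(1+\lambda/\mu)$ and recalling $\beta=\gamma(1+\mu+\lambda)$, the coefficient of $\langle y-x,\nabla f(y)\rangle$ in the Lie derivative is $\kappa(\gamma-\beta+\mu\sigma)=0$, so the cross term cancels \emph{identically}; what remains is $-\kappa(1-\mu)\|x-y\|^2-\kappa\sigma\beta\|\nabla f(y)\|^2$ plus the normal-cone pairing, and after the Moreau split of $-\nabla f(y)$ this yields $W(t,x,y)=\kappa(t)\bigl((1-\mu(t))\|x-y\|^2+\sigma\beta\|P_{\mathcal{T}_C(y)}[-\nabla f(y)]\|^2\bigr)$, whose zero set is exactly $\Lambda$ as in Example 2.

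Why your two routes do not get there: (a) completing the square turns $\kappa\gamma\lambda\langle y-x,\nabla f\rangle$ into $\tfrac{\kappa(1-\mu)}{2}\|x-y\|^2+\tfrac{\kappa\gamma^2\lambda^2}{2(1-\mu)}\|\nabla f(y)\|^2$, and the second piece is proportional to $\|\nabla f(y)\|^2$, which is \emph{not} dominated by the available dissipation $\kappa\sigma\beta\|P_{\mathcal{T}_C(y)}[-\nabla f(y)]\|^2$ near $\partial C$ (at a constrained KKT point the tangential projection vanishes while $\|\nabla f\|$ need not), so either the Lyapunov inequality fails there or you must inflate $W$'s zero set beyond the KKT set, destroying the identification $\Lambda|_y=\mathcal{L}$; (b) a correction ``vanishing on the diagonal $\{x=y\}$'' has the wrong shape --- the term that generates the needed $+\kappa\gamma\lambda\langle y-x,\nabla f\rangle$ through the $\dot y$-equation is $\gamma(\lambda(t)/\mu(t))f(y)$, a function of $y$ alone that certainly does not vanish on the diagonal. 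Two smaller corrections: on $\Lambda$ the resulting $V$ restricts to $\sigma_k f$, not $\gamma f$ plus a constant (harmless for the empty-interior argument since $\sigma_k>0$), and the claim that each downward jump of $\lambda_k/\mu_k$ can only lower $V$ implicitly requires $f\ge 0$, which is available only after shifting by the lower bound of $f$.
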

\begin{proof}
  Denote the composite vector field $\psi(t,p,q)$ by
  \begin{equation}
    \psi(t,x,y) := \left(
    \begin{matrix}
      \kappa(t) [y -x - \gamma \nabla f(y)] \\
      \kappa(t) [ \mu(t) y - \mu(t) x- \beta(t) \nabla f(y)]
    \end{matrix}
    \right).
  \end{equation}
  Consider the function $V: \mathbb{R} \times \mathbb{R}^m \times \mathbb{R}^m \to \mathbb{R}$ as
  \begin{equation}
    V(t,x,y) = \frac{1}{2}\|x-y\|^2 + \gamma \left( 1+ \frac{\lambda(t)}{\mu(t)}\right) f(y).
  \end{equation}
  Recall that $V$ is piecewisely explicitly independent of time in the interval $[t_k, t_{k+1})$ for all $k \ge 0$. Since $\lambda_k / \mu_k$ decreases with respect to $k$, it is sufficient to discuss the variation of $V$ piecewisely. For all $t \in [t_k, t_{k+1})$, we have
  \begin{equation}
    V(t,x,y) = V_k(x,y) = \frac{1}{2}\|x-y\|^2 + \sigma_k f(y),
  \end{equation}
  where $\sigma_k := \gamma ( 1+ \lambda_k/ \mu_k)$. Take $W(t,x,y) = \kappa(t) ((1-\mu(t))\|x-y\|^2 + R(y))$ for
  \begin{equation}
    \begin{aligned}
      R(y) &= \sigma_k \beta_k \langle -\nabla f(y) -w_0, -\nabla f(y) \rangle \\
      &= \sigma_k \beta_k \|P_{\mathcal{T}_C(y)}[-\nabla f(y)]\|^2,
    \end{aligned}
  \end{equation}
  where $w_0 = P_{N_C(y)}[-\nabla f(y)]$. For all $w \in N_C(y)$, 
  \begin{equation}
    \begin{aligned}
      \langle \nabla V_k(x,y), -\psi(t,x,y)-(0,u)^T \rangle = -W(t,x,y) \\ - \sigma_k \beta_k \|w_0\|^2 - \langle y-x+\sigma_k \nabla f(y), w \rangle.
    \end{aligned}
  \end{equation}
  Since $0 \in N_C(y)$, we conclude that for $\zeta = \nabla V_k(x,y)$
  \begin{equation}
    \min_{\|w\| \le \psi(t,x,y)} \langle \zeta, -\psi(t,x,y)-(0,w)^T \rangle + W(t,x,y) \le 0.
  \end{equation}
  The rest of the proof follows.
\end{proof}

\textbf{\emph{Example 4}: Decentralized constrained optimization.} If asymptotic consensus can be achieved, an unconstrained decentralized optimization scheme can be viewed as a combination of a centralized vector (the consensus vector) and a vanishing random perturbation (the consensus error or the random shuffling). Therefore, such a method is intentionally a variant of a centralized stochastic approximation scheme. 

The primary obstacle in analyzing constrained decentralized optimization methods results from the nonsmooth part of the iteration of the consensus vector. Consequently, it is significant to associate the iteration with a projected stochastic approximation scheme, of which the convergence can be established with the theoretical results in the previous section.

We take the classical multiagent projected method presented in \cite{2013ConvAMultiagentProjected} for example. Under certain assumptions on time-varying weighted graphs, the multiagent projected method can be transformed into the following projected stochastic approximation scheme in terms of the consensus vector $\theta_k$:
\begin{equation}
  \theta_{k+1} = P_{\mathscr{C}}[\theta_{k} - \gamma_{k+1} \nabla f(\theta_k) + \gamma_{k+1}(\xi_{k+1} + r_{k+1})],
\end{equation}
where $\{\xi_k\}$ and $\{r_k\}$ are random perturbations satisfying
\begin{equation}
  \lim_{k\to \infty} \sup_{\ell \ge k} \left| \sum_{n=k}^\ell \gamma_n \xi_n \right| = 0, \ \lim_{k\to \infty} r_k = 0, \ \text{a.s.},
\end{equation}
and $\{\gamma_k\}$ is the positive time-varying step size such that $\sum_{k} \gamma_k = \infty$ and $\sum_{k} \gamma^2_k < \infty$. Let $\mathscr{C}$ be a nonempty convex and compact subset in $\mathbb{R}^m$ and $\mathcal{L}$ be the KKT point set of $f$ on $\mathscr{C}$. Assume that $f(\mathcal{L})$ has empty interior. The convergence of this scheme can be immediately established by \cref{corollary:asymptotic-general-conv}.

\vspace{-6pt}

\subsection{Schemes with Compressors}
For convenience, we consider the usual Euclidean space $\mathbb{R}^m$ in this subsection. By a compressor, we mean a (probably stochastic) mapping $\vartheta$ such that 
\begin{equation}
  \label{eqn:unbiased-compressor}
  \mathbb{E}[\vartheta(x) - x] = 0, \ \mathbb{E}\|\vartheta(x) - x\|^2 \le \mu (1+\|x\|^2), 
\end{equation}
for some constant $\mu > 0$. We consider the following scheme:
\begin{equation}
  \label{eqn:proj-compress-sgd}
  y_{n+1} = P_{\mathscr{C}}[y_n - h_{n+1}\vartheta(\nabla f(y_n))],
\end{equation}
where the parameter settings are the same as Example 1 in the last subsection. Since $\vartheta$ is unbiased, we have
\begin{equation}
  y_{n+1} = P_{\mathscr{C}}[y_n - h_{n+1}(\nabla f(y_n) + \xi_n)],
\end{equation}
where $\xi_n$ represents the compression error, and it is direct to check that $\{\xi_n\}$ is a martingale difference noise sequence with 
\begin{equation*}
  \mathbb{E}[\|\xi_{n+1}\|^2|\mathscr{F}_n] \! \le \! \mu(1+\| \nabla f(y_{n+1}) \|^2) \! \le \! K(1+\|y_{n+1}\|^2),
\end{equation*}
a.s. due to the Lipschitz continuity of $\nabla f$, where $K>0$ is a constant and $\mathscr{F}_n := \sigma (x_\ell, \ell \le n)$ is the filtration generated by past parameters. Therefore, $(h_n,\xi_n)$ satisfies \cref{eqn:small-perturbation} by Doob's martingale convergence theorem. By \cref{corollary:asymptotic-general-conv}, \cref{eqn:proj-compress-sgd} converges to the KKT point set of $f$ on $\mathscr{C}$.

Note that similar analysis naturally applies to PNAG and POGM just by replacing the gradient $\nabla f(x)$ with $\vartheta(\nabla f(x))$.

\subsection{A Distributed Scheme with Compressors}



\begin{algorithm}[t]
  \textbf{Setup:} Each agent $i$ shares a common parameter $x^{-1} = x^0 \in C$, and applies a compressor $\vartheta_i$. Set step sizes $\{\lambda_k \ge 0, \alpha_k > 0\}$ and $k=0$. \\
  \textbf{Steps: (execute until a stopping criterion is satisfied)} \\
  1. Each agent $i$ obtains a noisy sample $g_i^k$ from $\nabla f_i(x^k + \lambda_k(x^k-x^{k-1})) + \xi_i^k$ and applies the compressor $\tilde{g}_i^k = \vartheta_i(g_i^k)$. \\
  2. Agents transmit the compressed gradients $\tilde{g}_i^k$ to the server. \\
  3. The server aggregates the compressed gradients and update the parameter by
  \begin{equation}
    \label{eqn:iterate-DPCGD}
    x^{k+1} = P_C\left[x^k - \frac{\alpha_k}{n} \sum_{i=1}^n \tilde{g}_i^k \right]. 
  \end{equation}
  4. The server send $x^{k+1}$ and the agents update $x^k \leftarrow x^{k+1}$. \\
  5. Set $k \leftarrow k+1$ and go back to step 1.
  \caption{The distributed projected compressed gradient descent (DPCGD) method.}
  \label{alg:DPCGD}
\end{algorithm}


In this subsection, we propose a distributed projected compressed gradient descent (DPCGD) method for solving the following distributed optimization problem: $\min_{x\in C} f(x)$, for $f:= \frac{1}{n} \sum_{i=1}^n f_i$, where $f_i$ is the local private function of agent $i$. The algorithm is presented in \cref{alg:DPCGD}. The following assumption is required to show its convergence.

\begin{assumption}
  \label{asp:distributed}
  The following conditions hold:
  \begin{itemize}
    \item The set $C$ is compact and convex;
    \item Each $f_i$ is differentiable and its gradients are Lipschitz-continuous on $C$;
    \item Each $\vartheta_i$ satisfies \cref{eqn:unbiased-compressor} with respect to $\mu_i$;
    \item Given the KKT point set $\mathcal{L}$, $f(\mathcal{L})$ has empty interior;
    \item The step sizes are nonnegative and satisfy
    \begin{equation}
      \sum_{k=1}^\infty \alpha_k = \infty, \ \sum_{k=1}^\infty \alpha_k^2 < \infty, \ \lim_{k\to \infty} \lambda_k = 0;
    \end{equation}
    \item Denote the filtration by $\mathscr{F}_k := \sigma (x^\ell,\ell \le k)$. $\xi_i^k$ satisfies
    \begin{equation}
      \label{eqn:random-perturbation-DPCGD}
      \mathbb{E}[\xi_i^k | \mathscr{F}_k] = 0, \ \mathbb{E}[\|\xi_i^k\|^2 | \mathscr{F}_k] \le K_i (1 + \|x^k\|^2).
    \end{equation}
  \end{itemize}
\end{assumption}

\begin{theorem}
  \label{theorem:conv-DPCGD}
  Let \cref{asp:distributed} hold. The iterates $\{x^k\}$ generated by DPCGD converge to $\mathcal{L}$.
\end{theorem}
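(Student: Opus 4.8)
The plan is to recast the DPCGD iteration \cref{eqn:iterate-DPCGD} as a perturbed projected stochastic approximation scheme driven by the time-independent vector field $\psi(t,x)\equiv\nabla f(x)$ on the fixed set $C$, and then to invoke the framework of \cref{sec:analysis} (the remark preceding \cref{corollary:asymptotic-general-conv} together with \cref{theorem:general-func-V}) with the Lyapunov function $V=f$. First I would decompose the aggregated compressed gradient: writing $\tilde g_i^k=\nabla f_i\big(x^k+\lambda_k(x^k-x^{k-1})\big)+\xi_i^k+\big(\vartheta_i(g_i^k)-g_i^k\big)$ and averaging over $i$ using $f=\tfrac1n\sum_i f_i$, the update becomes $x^{k+1}=P_C\big[x^k-\alpha_k\big(\nabla f(x^k)+\xi_{k+1}+e_k\big)\big]$, where $e_k:=\nabla f\big(x^k+\lambda_k(x^k-x^{k-1})\big)-\nabla f(x^k)$ plays the role of the vanishing deterministic perturbation $r_{k+1}$ and $\xi_{k+1}:=\tfrac1n\sum_i\xi_i^k+\tfrac1n\sum_i\big(\vartheta_i(g_i^k)-g_i^k\big)$ collects the sampling and compression noise. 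This is exactly the form of the scheme in the remark preceding \cref{corollary:asymptotic-general-conv} (with $\phi_k=\mathrm{id}$).

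Next I would check that the two perturbation sequences are admissible. Since $C$ is compact, the extrapolated points $x^k+\lambda_k(x^k-x^{k-1})$ stay in a fixed bounded set, so Lipschitz continuity of $\nabla f$ gives $\|e_k\|\le L\lambda_k D$ with $D$ the diameter of $C$; as $\lambda_k\to0$ this yields $e_k\to0$ almost surely. For $\xi_{k+1}$, conditioning first on $\mathscr{F}_k$ and then on the realized samples $\{g_i^k\}$, the unbiasedness in \cref{eqn:unbiased-compressor} together with $\mathbb{E}[\xi_i^k|\mathscr{F}_k]=0$ from \cref{eqn:random-perturbation-DPCGD} gives $\mathbb{E}[\xi_{k+1}|\mathscr{F}_k]=0$, and combining the variance bounds in \cref{eqn:unbiased-compressor} and \cref{eqn:random-perturbation-DPCGD} with the Lipschitz continuity of the $\nabla f_i$ and the boundedness $\{x^k\}\subset C$ yields $\mathbb{E}[\|\xi_{k+1}\|^2|\mathscr{F}_k]\le K(1+\|x^k\|^2)$ for some constant $K$. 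Hence $\{\xi_k\}$ is a martingale difference sequence with a.s. bounded conditional second moments, and since $\sum_k\alpha_k^2<\infty$ and $\{x^k\}$ is bounded, Doob's martingale convergence theorem (cf. the remark on martingale difference noise following \cref{corollary:asymptotic-general-conv}) shows $\sum_\ell\alpha_\ell\xi_\ell$ converges a.s., so \cref{eqn:small-perturbation} holds. It remains to verify \cref{assumption:conv-for-iterates} for $\psi\equiv\nabla f$: boundedness of $\{x^k\}$ is automatic because $x^k=P_C[\cdots]\in C$; $\nabla f$ is sweeping-regular on $([0,t],C)$ since it is Lipschitz and bounded on the compact set $C$; after relabeling the step sizes so that $h_0=0$, \cref{asp:distributed} gives $\sum h_k=\infty$ and $\sum h_k^2<\infty$, hence $h_k\to0$ and $\sum h_k^3<\infty$; the weak monotonicity \cref{eqn:weak-monotonicity} holds with $\gamma\equiv-L$, for which $\inf_{\{t,s:0\le t-s\le T\}}\int_s^t\gamma\,d\tau=-LT>-\infty$; and bounded variations is trivial since $\psi$ is time-independent, so $S_k\equiv0$.

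Finally I would identify $V=f$ as a Lyapunov function for $\mathcal{L}$ with respect to the trajectory of \cref{eqn:sweeping-process}. Using the equivalent ODE $\dot z=P_{\mathcal{T}_C(z)}[-\nabla f(z)]$ and \cite[Proposition 2]{2006Equivalence}, one has $\tfrac{d}{dt}f(z(t))=\langle\nabla f(z(t)),\dot z(t)\rangle=-\|P_{\mathcal{T}_C(z(t))}[-\nabla f(z(t))]\|^2\le0$, with equality if and only if $0\in\nabla f(z)+N_C(z)$, i.e. $z\in\mathcal{L}$, and $\mathcal{L}$ is precisely the fixed-point set of this flow; consequently $f(y)<f(x)$ for every $x\notin\mathcal{L}$ and $y\in\Upsilon(t,x)$, $t>0$, while $f(y)\le f(x)$ for $x\in\mathcal{L}$. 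Since $f(\mathcal{L})$ has empty interior by \cref{asp:distributed}, the reduction above together with \cref{theorem:general-func-V} (equivalently \cref{corollary:asymptotic-general-conv}) yields that $\{x^k\}$ converges to an internally chain transitive set contained in $\mathcal{L}$ on which $f$ is constant, which is the assertion. The main obstacle is the second step, namely verifying that $\xi_{k+1}$ is a genuine martingale difference with conditional variance $O(1+\|x^k\|^2)$: there are two nested sources of randomness (the sampling noise $\xi_i^k$ and the stochastic compressors $\vartheta_i$) and $g_i^k$ itself depends on $\xi_i^k$, so one must condition in the correct order and first control $\mathbb{E}[\|g_i^k\|^2|\mathscr{F}_k]$ via Lipschitz continuity and compactness before applying the compressor bounds; a minor secondary point is justifying the evaluation of $\nabla f_i$ at the possibly infeasible extrapolated argument, which is handled by the boundedness of $C$.
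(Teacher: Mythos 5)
Your proposal is correct and follows essentially the same route as the paper's own proof: both decompose the aggregated compressed gradient into the exact gradient $\nabla f(x^k)$ plus a vanishing deterministic extrapolation error and a martingale-difference term combining sampling and compression noise, and then invoke \cref{corollary:asymptotic-general-conv}. Your write-up is more explicit about verifying \cref{assumption:conv-for-iterates} and identifying $V=f$ as the Lyapunov function, details the paper leaves implicit by citing the corollary.
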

\begin{proof}
  It is sufficient to estimate the error between the compressed gradients and the raw gradients. To be specific, we need to bound $\zeta_i^k = \tilde{g}_i^k - \nabla f_i(x^k)$. Let $\nu_i^k$ stand for $\nabla f_i(x^k + \lambda_k(x^k - x^{k-1})) - \nabla f_i(x^k)$. Due to the almost sure boundedness of $\{x^k\}$ and $\lambda_k \to 0$, we have $\nu_i^k \to 0$ a.s. Moreover, we have $\mathbb{E}[\tilde{g}_i^k - g_i^k|\mathscr{F}_k] = 0$ and
  \begin{equation*}
    \mathbb{E}[\|\tilde{g}_i^k - g_i^k\|^2|\mathscr{F}_k] \le \mu_i \mathbb{E}[1+\|g_i^k\|^2|\mathscr{F}_k] \le K_i (1+\|x^k\|^2),
  \end{equation*}
  a.s. for some constant $K_i > 0$, because of the Lipschitz continuity of $\nabla f_i$, compactness of $C$ and the property of $\xi_i^k$ \cref{eqn:random-perturbation-DPCGD}. To summarize, \cref{eqn:iterate-DPCGD} can be written as
  \begin{equation*}
    x^{k+1} \! = \! P_C\!\left[x^k \! - \! \alpha_k \nabla f(x^k) \! - \! \frac{\alpha_k}{n} \! \sum_{i=1}^n (\tilde{g}_i^k \! - \! g_i^k \!+\! \nu_i^k \!+\! \xi_i^k)\right].
  \end{equation*}
  Via \cref{corollary:asymptotic-general-conv}, the proof is completed.
\end{proof}

\section{Numerical Simulation}
\label{sec:simulation}

In this section, we provide the outcomes of the numerical simulations, which serve to validate the theoretical convergence analysis presented in Section \ref{sec:appl}. Moreover, the simulation results for the proposed DPCGD are presented to show its effectiveness. The convergence behavior of the algorithms is demonstrated under randomized initial conditions, highlighting the stability of the methods and the effectivenesss of the developed theoretical results. 

\vspace{-8pt}
\subsection{Centralized Methods with Exact Gradients}
Specifically, we examine the standard PGD, PNAG with fixed parameters (FPNAG), PNAG with tuned time-dependent parameters (PNAG) and POGM. For PGD, we use vanishing step sizes as $h_k = 1/(k+1)$. For FPNAG, we let $\gamma = 0.1$ and $\mu = 0.5$. The parameters of PNAG and POGM follow the standard treatment as the unconstrained versions in the original articles (see \cite{1983NAG,2016OptimizedGradientDescent} for more detail). 

\begin{figure}[t]
  \centering
  \includegraphics[width=0.48\textwidth]{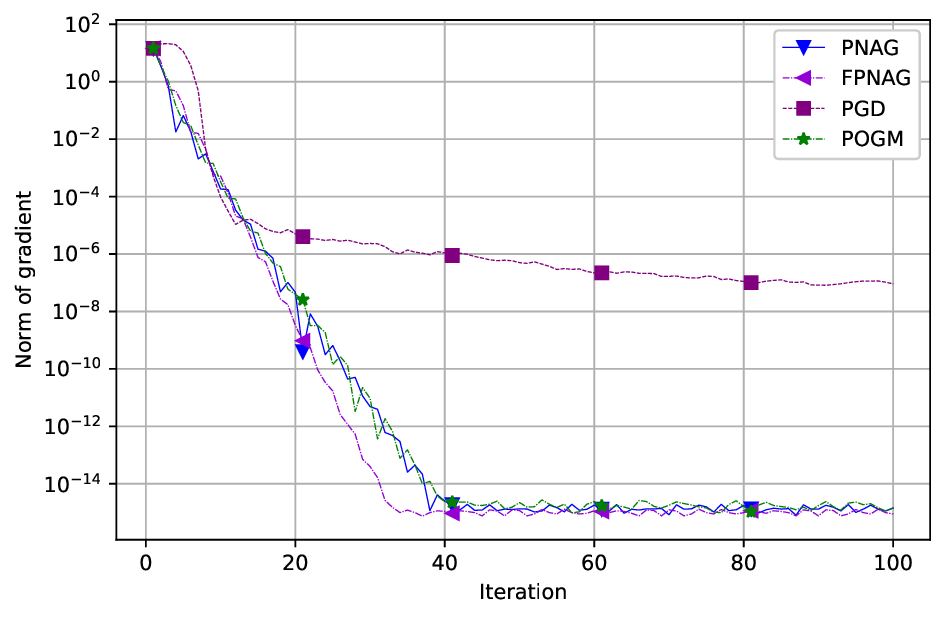}
  \caption{Results of constrained convex optimization.}
  \label{fig:convex-opt-without}
\end{figure}

We begin with a classical constrained convex optimization problem formulated as
\begin{equation}
  \label{eqn:convex-opt}
  \min_{x \in \mathcal{D}} f(x) = \frac{1}{2} \sum_{i=1}^{M} \|x - a_i\|^2,
\end{equation}
where $\mathcal{D} = [-1,1]^M$, $M = 10$ and each independent $a_i$ is randomly selected from a uniform distribution $U(-1,1)$. Since each $a_i \in [-1,1]$, it is straightforward to conclude that the critical point must lie in the interior of $\mathcal{D}$. Hence, the KKT point $x^*$ must satisfy $\nabla f(x^*) = 0$. This, indeed, matches the simulation result presented in Fig. \ref{fig:convex-opt-without}. Moreover, since both NAG and OGM are momentum-based methods, they inherently possess a convergence rate $O(1/k^2)$ compared to PGD with convergence rate $O(1/k)$. Further, the result implies that projection will not slow down the convergence rate, which can be deduced from the nonexpansiveness of projection in a geometric perspective. 

\begin{figure}
  \centering
  \includegraphics[width=0.48\textwidth]{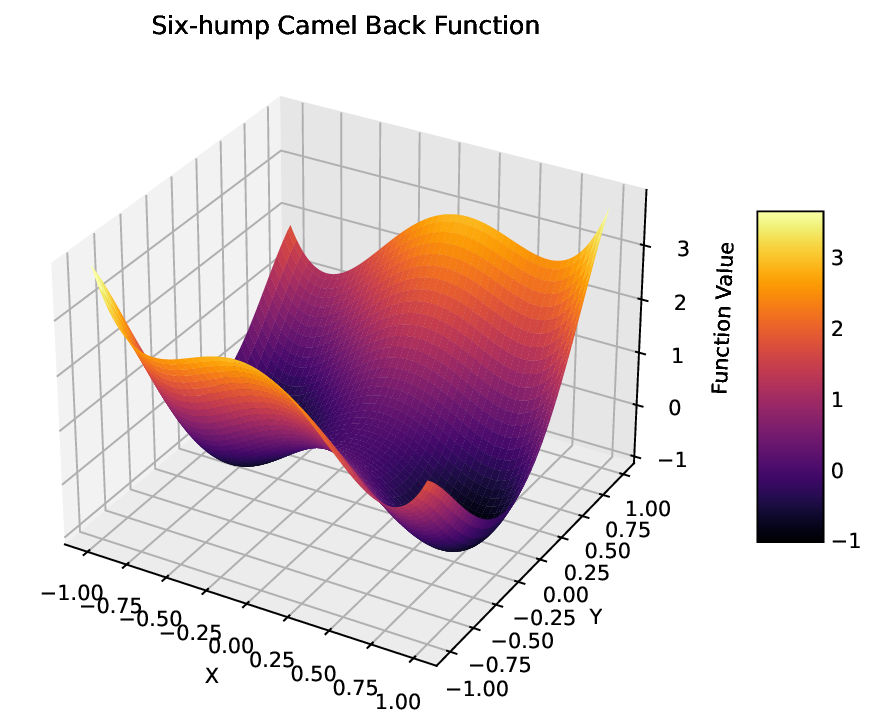}
  \caption{Six-hump camel back function.}
  \label{fig:sixh}
\end{figure}

\begin{figure}[t]
  \centering
  \includegraphics[width=0.48\textwidth]{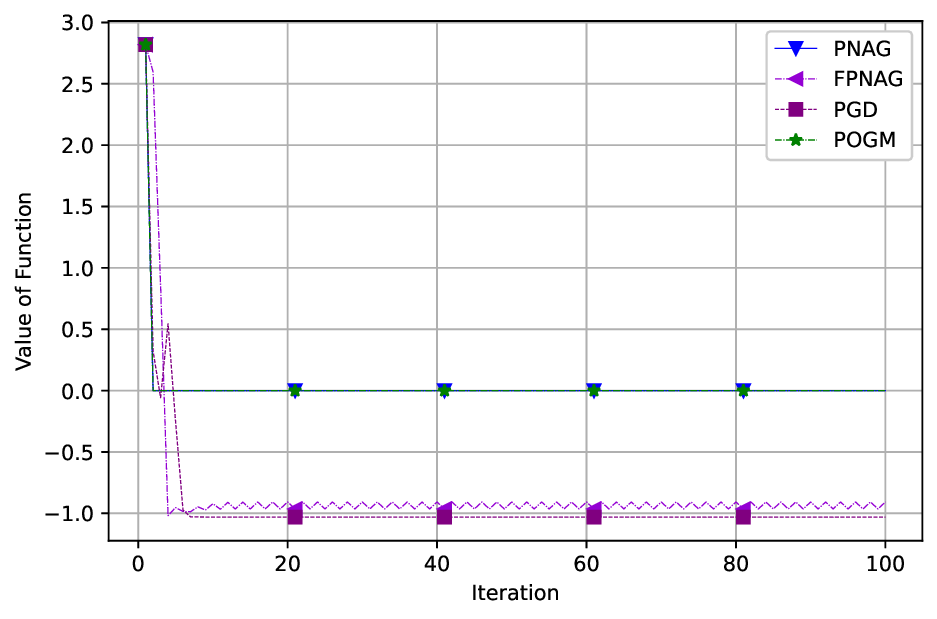}
  \caption{Results of optimizing the six-hump camel back function within the area $[-1,1]^2$.}
  \label{fig:sixh-opt-without}
\end{figure}

Next we consider a smooth nonconvex function called six-hump camel back function given by
\begin{equation}
  g(x,y) = (4-2.1x^2+x^{4/3})x^2 + xy + (-4+4y^2)y^2.
\end{equation}
As presented in Fig. \ref{fig:sixh}, the function has a global minimum $f^* = -1.0316$ for $(x^*,y^*) = \pm (0.0898,-0.7126)$ within the area $[-1,1]^2$. While it is in general NP-hard to find the global minimum for a constrained nonconvex optimization problem, PGD (FPNAG) succeeds to find this point (or oscillates around the neighborhood of the minimum) as presented in Fig. \ref{fig:sixh-opt-without}. Both POGM and PNAG fall into the trap of the saddle point. Especially, PGD is able to find a ``better'' local minimum (actually the best) than the other methods. This phenomenon is quite interesting since the standard gradient-based method outperforms momentum-based methods in both stability and final precision. Such a result indicates that the projected method may possess different convergence behavior from the unconstrained counterpart.

\begin{figure}[t]
  \centering
  \includegraphics[width=0.48\textwidth]{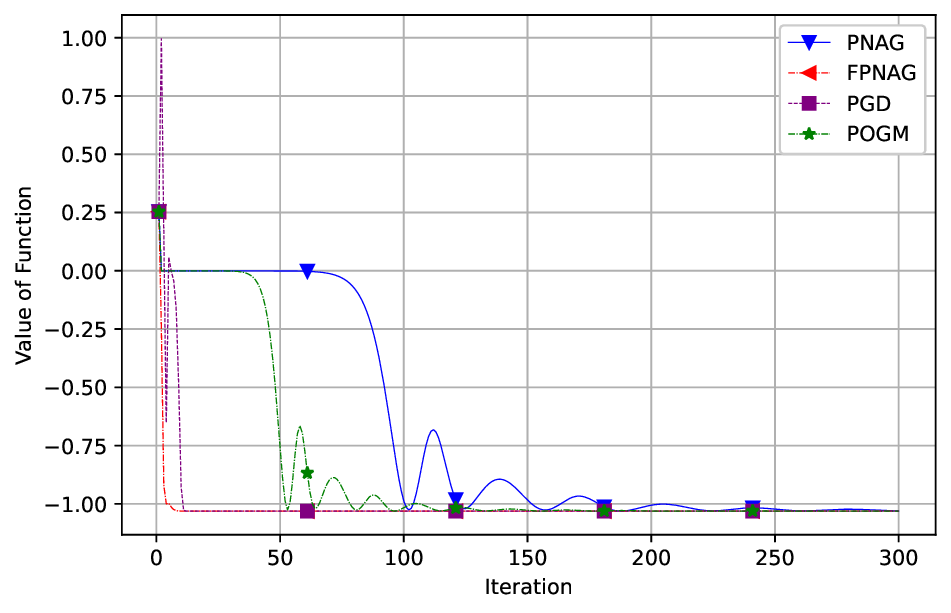}
  \caption{Results of optimizing the six-hump camel back function with methods incorporating random perturbations.}
  \label{fig:sixh-opt-with}
\end{figure}

Next, we discuss the effects of adding random perturbations to gradients in the iterations. The perturbation $\xi_k$ is a Gaussian stochastic vector with distribution $\mathcal{N}(0, \epsilon)$ for $\epsilon=0.001$. The corresponding stochastic scheme is modified at gradients ($\nabla f(y_k) \to \nabla f(y_k) + \xi_k$) and the learning rate factor before the gradient term ($\gamma \to \gamma/k$). The results are demonstrated in \ref{fig:sixh-opt-with}. It is clear that both PNAG and POGM benefit from the perturbation as for optimizing the six-hump camel back function as presented in Fig. \ref{fig:sixh-opt-with} compared with Fig. \ref{fig:sixh-opt-without}. To summarize, it could be beneficial to add random perturbations when the current local minimum is not ``good'' enough (a saddle point).

\vspace{-8pt}
\subsection{The Distributed Scheme with Compressed Gradients}
In this subsection, we apply a uniform random-vector compressor \cite{2024DistSubgradient} $\vartheta$, which is a simple extension of the scalar version by element-wise operation. To be specific, there exists some integer $\ell$ for any $x\in \mathbb{R}$ satisfying $\ell \le x < \ell + 1$. For a $b$-bit compressor, $x$ falls in $[\tau_i, \tau_{i+1})$, where $\tau_i= \ell + i \cdot 2^{-b}$ for $0 \le i \le 2^b$. Denoting the compressed random element by $q = \vartheta(x)$, we associate $x$ with $\tau_i$ or $\tau_{i+1}$ via
\begin{equation*}
  \mathrm{P}(q = \tau_{i+1}|x) = 2^b (x-\tau_i), \ \mathrm{P}(q =\tau_{i+1}|x) = 2^b (\tau_{i+1} - x),
\end{equation*}
which indicates that $\mathbb{E}[q|x] = x$, $\text{Var}(q) \le 4^{-b}$ and $\vartheta$ is an unbiased compressor with uniformly bounded variance. 

Consider a problem of power allocation for a wireless network composed of $N = 4$ sources and a central destination. We assume that the signal received by the destination is corrupted by an additive white Gaussian noise (AWGN) of variance $\sigma^2$ and the interference produced by the other sources. Denote by $A_i$ the channel gain between source $i$ and the destination and by $p_i$ the transmission power of source $i$. Therefore, we obtain the signal to interference-plus-noise ratio expressed by $A_ip_i / (\sigma^2 + \sum_{j\neq i} A_j p_j)$. We consider, for example, each transmitter uses a QPSK modulation and the corresponding bit error probability $F_i$ for transmitter $i$ is 
\begin{equation}
  F_i = Q\left( \sqrt{ \frac{A_ip_i}{\sigma^2 + \sum_{j\neq i} A_j p_j} } \right),
\end{equation}
where $Q(x) = \frac{1}{\sqrt{2\pi}} \int_x^\infty e^{-t^2/2} dt$. The objective is to minimize the weighted sum of bit error probabilities, i.e., 
\begin{equation}
  \min_{p \in \mathscr{C}} F(p) := \sum_{i=1}^N \gamma_i F_i(p),
\end{equation}
where $\gamma_i$ is the weight accessible to transmitter $i$ only, $p = (p_1,p_2,\dots,p_N)$, $\mathscr{C} = \{p: 0 < p_{\text{min}} \le p_i \le p_{\text{max}}, \forall i = 1,2,\dots, N\}$, and $p_{\text{min}}$ ($p_{\text{max}}$) is the minimum (maximum) transmission power of transmitter $i$. 

It is clear that the above optimization problem is nonconvex, the objective function is differentiable and has Lipschitz-continuous gradients on $\mathscr{C}$, and $\mathscr{C}$ is compact and convex. We can apply DPCGD to solve the problem. $\{\gamma_i\}$ are set to be $[0.4, 0.3, 0.2, 0.1]$, $\sigma^2 = 0.1$ and $A = [2, 5/3, 4/3, 1]$. The transmission power has limitations $p_{\text{min}} = 0.5$ and $p_{\text{max}} = 10$ (the unit can be ``Watt'' in practice). The step sizes are $\alpha_k = 100/k$ and $\lambda_k = 1/(1+\log(k))$. 

\begin{figure}[t]
  \centering
  \includegraphics[width=0.48\textwidth]{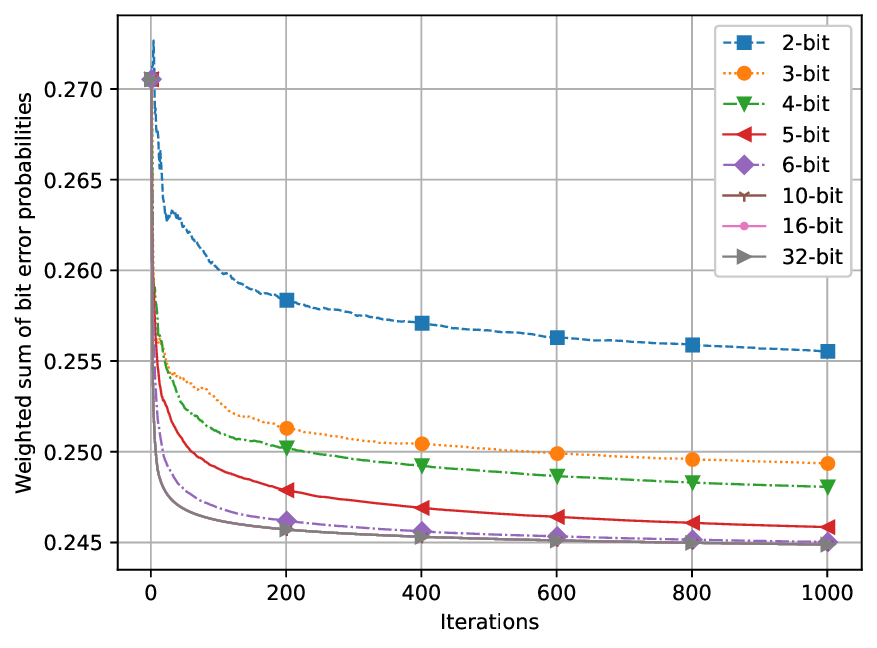}
  \caption{Weighted sum of bit error probabilities compression for deterministic channels with respect to different compression bits, averaged with respect to 30 Monte-Carlo runs. The 32-bit compressor is regarded as the ground truth.}
  \label{fig:quantization}
\end{figure}

\begin{figure}[t]
  \centering
  \includegraphics[width=0.48\textwidth]{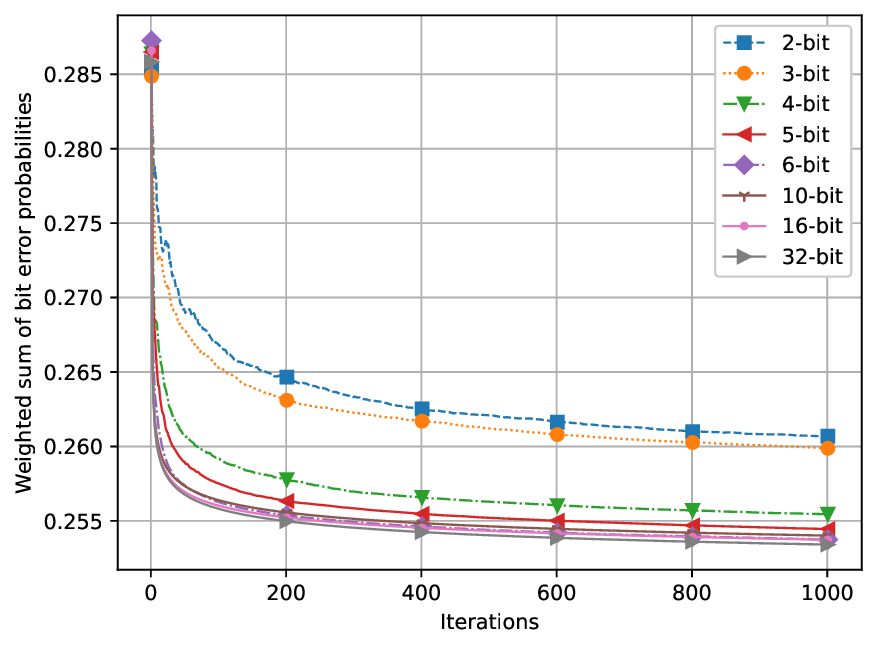}
  \caption{Weighted sum of bit error probabilities compression for random channels with respect to different compression bits, averaged with respect to 50 Monte-Carlo runs.}
  \label{fig:quantization-randomized}
\end{figure}

We examine the impact of varying bits on the performance of a compressor, as illustrated in Fig. \ref{fig:quantization}. The figure indicates that the rate of convergence within the scheme is adversely affected by the compressor, with the detrimental effect on convergence rate being inversely proportional to the number of bits allocated for compression. As the bit increases, the negative impact on the convergence rate is observed to decrease. Furthermore, the figure reveals that a 6-bit compression scheme is adequate to achieve a satisfactory rate of convergence, while simultaneously leading to a significant reduction in transmission data overhead.

Finally, we provide numerical results where the channel gains are random and time-varying. We let the channel gains be independently sampled from a uniform distribution $\mathcal{U}(0.5,1.5)$ such that $\mathbb{E}[A_i] = 1$ for all $i$. As shown in Fig. \ref{fig:quantization-randomized}, the trajectories against different bits are averaged based on 50 Monte-Carlo runs and the performance is comparable to the case of deterministic channels, which indicates that DPCGD is able to handle randomized channels. 
\vspace{-6pt}
\section{Conclusion}
\label{sec:conclusion}

In this article, we have explored constrained compressed optimization, which is the fundamental problem of many low-bit resolution applications, through the lens of dynamical systems theory. By establishing the connection between a PSP and its Euler discretization, we have obtained theoretical results similar to counterpart results of unconstrained dynamics. Notably, we have developed a novel framework for convergence analysis that transcends traditional numerical methods. Several examples of convergence analysis of proximal gradient methods have been provided to demonstrate the effectivenesss of the framework. In the future, we plan to focus on more strigent constraints (one-bit signal for example) and decentralized problems on time-varying graphs. 

\bibliographystyle{IEEEtran}
\bibliography{IEEEabrv,ref}

\end{document}